\documentclass[onefignum,onetabnum]{siamart190516}



\usepackage{lipsum}
\usepackage{amsfonts}
\usepackage{graphicx}
\usepackage{epstopdf}
\usepackage{algorithmic}
\ifpdf
  \DeclareGraphicsExtensions{.eps,.pdf,.png,.jpg}
\else
  \DeclareGraphicsExtensions{.eps}
\fi


\newsiamremark{remark}{Remark}
\newsiamremark{hypothesis}{Hypothesis}
\crefname{hypothesis}{Hypothesis}{Hypotheses}
\newsiamthm{claim}{Claim}

\headers{Moment-driven predictive control of collective dynamics}{G. Albi, M. Herty, D. Kalise and C. Segala}

\title{Moment-driven predictive control of mean-field collective dynamics\thanks{Submitted to the editors DATE.
\funding{GA and CS thank the Italian Ministry of Instruction, University and Research (MIUR) to support this research with funds coming from PRIN Project 2017 (No. 2017KKJP4X entitled “Innovative numerical methods for evolutionary partial differential equations and applications”) and program Department of Excellence. GA and CS are member of the {\em INdAM-GNCS}. MH thanks HE5386/19,18.}}}

\author{Giacomo Albi\thanks{Department of Computer Science, University of Verona, Str. Le Grazie 15, Verona, I-37134, Italy (\email{giacomo.albi@univr.it})} \and Michael Herty\thanks{IGPM, RWTH Aachen University, Templergraben, 55, D-52062 Aachen, Germany (\email{herty@igpm.rtwh-aachen.de})} \and Dante Kalise\thanks{School of Mathematical Sciences, University of Nottingham, University Park, Nottingham NG7 2QL, United Kingdom (\email{dante.kalise@nottingham.ac.uk})}\and Chiara Segala\thanks{Department of Mathematics, University of Trento, Sommarive , Trento, Italy (\email{chiara.segala-1@unitn.it})}
}

\usepackage{amsopn}


\usepackage{graphicx}
\usepackage{epstopdf}
\usepackage{rotating}
\usepackage{latexsym}
\usepackage{mathrsfs}
\usepackage{hyperref}
\usepackage{amscd}
\usepackage{sidecap}
\usepackage{color}
\usepackage{algorithm}
\usepackage{algorithmic} 
\usepackage{tikz}
\usepackage{enumerate}
\usepackage{array}
\usepackage{amsmath}
\usepackage{booktabs}

\numberwithin{equation}{section}
\newcolumntype{P}[1]{>{\centering\arraybackslash}p{#1}}
\newcolumntype{M}[1]{>{\centering\arraybackslash}m{#1}}

\graphicspath{{pics/}}


\newcommand{\R}{\mathbb R}

\newcommand{\ko}{k_o}
\newcommand{\kd}{k_d}

\def\be#1\ee{\begin{equation}#1\end{equation}}

\newtheorem{thm}{Theorem}[section] 
 
\newtheorem{lem}[thm]{Lemma} 
\newtheorem{prop}[thm]{Proposition}

\parindent=0pt

\newcommand{\bq}{\begin{equation}}
\newcommand{\eq}{\end{equation}}

\ifpdf
\hypersetup{
  pdftitle={Moments driven predictive control (MdPC) for mean-field collective dynamics},
  pdfauthor={G. Albi, M. Herty, D. Kalise and C. Segala}
}
\fi




\begin{document}
	\maketitle
	
	\begin{abstract}
		The synthesis of control laws for interacting agent-based dynamics and their mean-field limit is studied. A linearization-based approach is used for the computation of sub-optimal feedback laws obtained from the solution of differential matrix Riccati equations. Quantification of dynamic performance of such control laws leads to theoretical estimates on suitable linearization points of the nonlinear dynamics. Subsequently, the feedback laws are embedded into nonlinear model predictive control framework where the control is updated adaptively in time according to dynamic information on moments of linear mean-field dynamics. The performance and robustness of the proposed methodology is assessed through different numerical experiments in collective dynamics.
	\end{abstract}

\begin{keywords}
Agent-based dynamics, mean-field equations, optimal feedback control, Riccati equations, nonlinear model predictive control
\end{keywords}

\begin{AMS}
	68Q25, 68R10, 68U05
\end{AMS}

	\tableofcontents

	\section{Introduction}
	The study of collective behaviour phenomena from a multiscale modelling perspective has seen an increased level of activity over the last years. Classical examples in socio-economy, biology and robotics are given by self-propelled particles, such animals and robots, see e.g. \cite{bellomo20review, MR2974186, MR2165531, MR3119732, MR2861587, MR2580958,Giselle}. Those particles interact according to a nonlinear model encoding various social rules as for example attraction, repulsion and alignment. A particular feature of such models is their rich dynamical structure, which include different types of emerging patterns, including consensus, flocking, and milling \cite{MR2887663, MR2247927,cucker2007emergent,d2006self,motsch2014heterophilious}.
	Understanding the impact of control inputs in such complex systems is of great relevance for applications. Results in this direction allow to design optimized actions such as collision-avoidance protocols for swarm robotics \cite{CKPP19,KPAsurvey15,MR3157726,Meurer}, pedestrian evacuation in crowd dynamics \cite{MR3542027,MR3308728,dyer2009leadership,MR4046175}, supply chain policies \cite{MR2844776,degond2007network}, the quantification of interventions in traffic management \cite{MR3948232,han2017resolving,stern2018dissipation} or in opinion dynamics \cite{AHP15,MR3268062,Garnier}.
	Here, we are concerned with  the control of high-dimensional nonlinear systems of interacting particles which can describe self-organization patterns. We will consider dynamics accounting the evolution of $N$ agents with state $v_i(t)\in\mathbb{R}^d$, undergoing a binary exchange of information weighted by a kernel $P:\R^d\times\R^d\to \R$ and forced by a control signal $u_i(t)\in\R^n$, represented as
	\begin{align}\label{nonlin_dynamics}
		\dot{v}_i &= \frac{1}{N}\sum_{j=1}^N P(v_i,v_j)(v_j-v_i) + u_i\,, \qquad v_i(0)=v_{i}^0\,,\qquad
		i=1,\ldots,N.
	\end{align}
	 While the original formulation the interacting particle system \eqref{nonlin_dynamics} is at microscopic level through a system of ODEs, the study of large particle limit has in many cases allowed to analyse emerging patterns or identifying relevant parameters. The derivation of a model hierarchy starting from dynamical systems to kinetic equations and fluid dynamic models has been studied intensively in the literature, for example in \cite{MR2536247, MR2438213, MR2740099,  MR2425606, MR3194652,carrillo2014derivation}. Of particular interest for control design purposes is the study of mean-field control approaches where the control law obtain formal independence on the number of interacting agents \cite{MR3264236,MR4028474,FPR14,MFG}. 
	The construction of computational methods for mean-field optimal control is a challenging problem due to the nonlocality and nonlinearity arising from the interaction kernel \cite{ACFK17,Pearson,liu2020computational}. Furthermore, depending  on the associated cost, non-smooth and/or non-convex optimization problems might also arise \cite{CFPT15,BAILO20181,ALBI201886}.

	In order to circumvent these difficulties we propose an approach where we  synthesize  sub-optimal feedback-type controls through the linearization of the interaction kernel and by solving the resulting linear-quadratic optimal control problem through a Riccati equation  -- all based on the  corresponding mean-field equations, similarly as in \cite{herty2015mean,herty2018suboptimal}. This approach also avoids the limitations associated to the synthesis of optimal feedback laws for high-dimensional nonlinear dynamics via the Hamilton-Jacobi-Bellman PDE \cite{dolgov2019tensor,azmi2020datadriven}.  The proposed methodology yields a control law for the linear model, which is later  embedded into the non-linear dynamics \eqref{nonlin_dynamics}. A sketch of this control concept is given in Figure \ref{f1} where we show the microscopic formulation of our approach. The main advantage of the proposed design is that unlike the classical control loop (\ref{f1}, left), we do not require a continuous measurement/estimation of the nonlinear state, nor the synthesis of a nonlinear optimal feedback law. Instead, we only require periodic measurements of the nonlinear state to update our linearized system.
	\begin{figure}[!ht]
		\begin{center}
			\includegraphics[width=\linewidth]{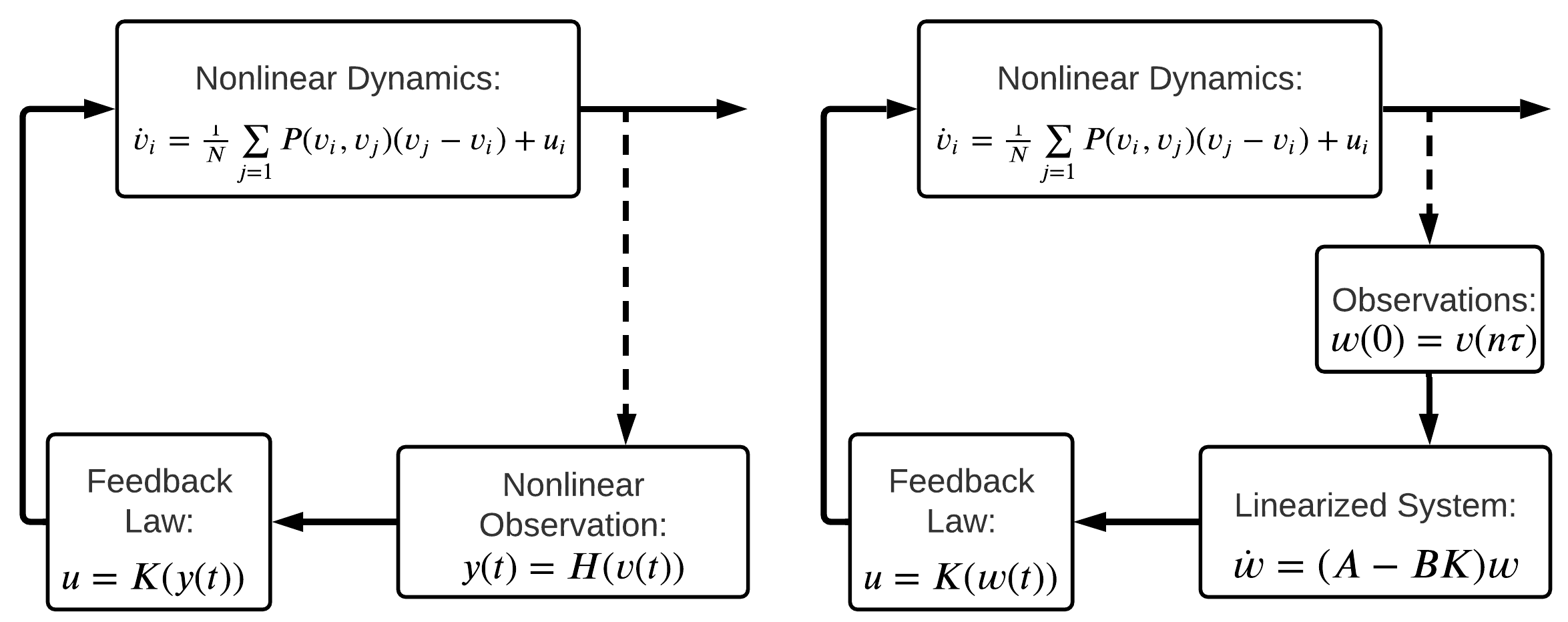}\\
			\caption{Left: the classical control loop for nonlinear dynamics. An often incomplete measurement of the state is recovered through a nonlinear observation. This observed state is inserted into a feedback law which requires the solution of a high-dimensional HJB PDE. This task is often unaffordable, and this block is replaced by a sub-optimal control law which is fed into the dynamics. Right: the Moment-driven predictive control methodology (MdPC) we propose simplifies the control loop on the left by requiring fewer measurements of the nonlinear state (every $\tau$ seconds), feeding this information into a linearized system for which the optimal feedback law can be easily computed.}
		\end{center}\label{f1}
	\end{figure} 
	
	However, using the linear optimal control within the nonlinear model does not necessarily yield a stabilizing control law, since over time the nonlinear dynamics may be far from the linearization point. Because of the latter, we aim at quantifying the impact of this control and the number of linearization updates needed to stabilize the nonlinear system. Hence, we propose different controls, distinguishing between closed-loop and open-loop strategies: in the first case the control acts having access to the full information of the non-linear system at each time. In the second case, the control only requires information available at initial time. We quantify the performances of these control approaches by estimating the decay of macroscopic quantities associated to \eqref{nonlin_dynamics} such as the first and second moments of the particle ensemble. 
	
	In order to enhance open-loop strategies we introduce a novel Moment-driven Predictive Control (MdPC) framework. Based on dynamic estimates of the moments decay, we are able to perform a forward error analysis to estimate the next  point in time where we need to update the linearization the dynamics and its feedback law. 
	This strategy can be seen as model predictive control (MPC) technique \cite{mayne2000constrained,camacho2013model,grune2017nonlinear,azmi}, where an open-loop control signal is applied only up to a subsequent point in time, after which the optimization is repeated. Moreover, the proposed control strategy is capable of  treating efficiently high-dimensional control problems, and is robust in the case of limited access to the state and can be implemented with a small number of updates.

	The rest of the paper is organized as follows. In Section \ref{sec:Riccatiderivation} we derive different control systems based on the linearization of the dynamics and the solution of the Riccati equation associated to the linear-quadratic optimal control problem. Section \ref{sec:mean-field} is devoted to the mean-field approximation of the microscopic dynamics and presents bounds  for the moments decay. In Section \ref{sec:MdPC} the Moment-driven Predictive Control framework is described and two different implementations are presented. Finally, in Section \ref{sec:Numerics} we assess the proposed design via numerical experiments, showing different applications in the context of opinion formation and alignment dynamics.
	
	\section{Control of an interacting multi-agent system}\label{sec:Riccatiderivation}
	In this section we present a linearization-based approach for the control of large-scale interacting particle systems. We are concerned with the evolution of $N$ interacting agents, whose states $v_i(t)\in \R^{d}$ evolve according to the following nonlinear model \eqref{nonlin_dynamics}. Further assumptions regarding the interaction kernel $P(u,v)$ governing these interactions will be discussed in the forthcoming sections. The term $u_i\in \R^d$ represents an external control variable acting over the $i-$th agent of the system. The complete set of control variables is denoted by $u=(u_1,\ldots,u_N)\in \R^{N\times d}$. In order to synthesize this control variable, we assume that $u$ is the minimizer of a cost function $J(u;v(0))$, that is
	\begin{align}\label{eq:func}
		u^* = \arg\min_{u} J(u;v^0):= \int_0^T\ell(v(t),u(t))\,dt\,,\qquad\text{subject to \eqref{nonlin_dynamics}}\,.
	\end{align}
    The optimization horizon $T$ expresses the time scale along which we minimize the running cost $\ell(v,u)$, encodes our objective as a function of the state and control variables. In the context of this work, we are interested in {\em consensus equilibrium}, namely, reaching a consensus velocity $\tilde{v}\in\R^d$ such that $v_{1}=\ldots=v_{N}=\tilde{v}$. With a slight abuse of notation, we shall denote indistinctively by $\tilde{v}$ the consensus state and the swarm configuration $\tilde{v}\in\R^{N\times d}$ where all the agents share the same velocity. In order to promote consensus emergence, we solve the optimal control problem \eqref{eq:func} to determine a control law $u$ driving the system towards $\tilde{v}$ using the running cost
	\begin{align}\label{eq:runcost}
		\ell(v,u) = \frac{1}{N}\sum_{j=1}^N ( \vert v_j - \tilde{v} \vert^2 + \nu \vert u_j\vert^2) 
	\end{align}
	where $\nu>0$ is a penalization parameter for the control energy, and $\tilde{v}$ is a prescribed consensus point. The norm $\vert \cdot \vert$ is the usual Euclidean norm in $\mathbb{R}^d$.

	\subsection{Linearization and the LQR approach for collective dynamics}
	
	We are interested in the synthesis of a feedback control law for the control of the non-linear dynamics \eqref{nonlin_dynamics}. We begin by defining the vector-valued function $F(v): \R^{N\times d}\rightarrow \R^{N\times d}$ such that
	\begin{equation}
		F_i(v) = \frac{1}{N}\sum_{j=1}^N P(v_i,v_j)(v_j-v_i),\qquad i=1,\ldots,N.
	\end{equation}
	We linearize the dynamics around $v_i=\bar{v}$ for every agent, which corresponds to an arbitrary equilibrium for the nonlinear dynamics \eqref{nonlin_dynamics}, i.e. $F(\bar v) = 0$, and which can be different from the consensus state $\tilde v$. We further assume that the communication function $P(v_i,v_j)$ is such that $P(\bar v,\bar v ) \equiv \bar p\,,$ with $\bar p$ a bounded value. Computing the first order approximation of $F(v)$ around $\bar v $, we have $\nabla_v F(\bar{v}) (v-\bar{v}) =  A (v-\bar v)\,,$ where $A\in\R^{N\times N}$ is the Laplacian matrix defined as follows
	\begin{align} \label{AB}
		(A)_{ij}=
		\begin{cases} &a_d=\frac{\bar{p}(1-N)}{N},\qquad i=j,\\
			&a_o=\frac{\bar{p}}{N},\qquad\qquad i\neq j.\\
		\end{cases}
	\end{align}
	We observe that the structure of matrix $A$ is such that $A(v-\bar v)=Av$ since $\bar v$ is a consensus point. To write the linearized system associated to \eqref{nonlin_dynamics} we further consider the change of variables $w_i(t):=v_i(t)-\bar v$, and we have
	\begin{equation}\label{lin_dynamics}
		\dot{w}_i = \frac{1}{N}\sum_{j=1}^N \bar{p}(w_j-w_i)+ u_i,\qquad\quad w_i(0)=v_{i}^0-\bar v.
	\end{equation}
	
For the linearized dynamics we cast as the {\em Linear Quadratic Regulator} (LQR) control problem, where the functional \eqref{eq:func} reads as follows
	\begin{align}\label{eq:lin_runcost}
		J(u,w(0)) = \int_0^T w^\top Q w+ \nu u^\top R u \, dt
	\end{align}
	in the matrix-vector notation with $w=(w_1,\ldots,w_N)$ and matrices $Q \equiv R = \frac{1}{N}\textrm{Id}\in\R^{N\times N}$. The linear dynamics \eqref{lin_dynamics} are equivalent to
	\begin{align}\label{eq:lin_dynamics}
		\dot {w} = A w + B u,\qquad w(0) = v^0-\bar v\,,
	\end{align}
	where $B=\textrm{Id}\in\R^{N\times N}$ is the identity matrix, impliying that the pair $(A,B)$ is controllable for any consensus state $\bar v$. Thus, in a neighbourhood of any constant state $\bar v$ the non linear system \eqref{nonlin_dynamics} admits a continuous stabilizing feedback, see e.g. \cite{bressan2007introduction}. In order to synthesize a stabilizing control law we solve the optimal control problem \eqref{eq:lin_runcost}--\eqref{eq:lin_dynamics}, whose exact solution is given in feedback form by
	\begin{align}\label{eq:Riccati_ctrl}
		u(t) = -\frac{N}{\nu} K(t)w(t)
	\end{align}
	with $K(t)\in \R^{N\times N}$ fulfilling the Differential Riccati matrix-equation
	\begin{equation}\label{eq:Riccati}
		- \dot{K} = KA+A^\top K-\frac{N}{\nu} KK + Q, \quad K(T) = 0\in\R^{N\times N},
	\end{equation}
	coupled to the evolution of the controlled system \eqref{eq:lin_dynamics}.	For a general linear system we need to solve the $N\times N$ differential system \eqref{eq:Riccati}, which can be costly for large-scale agent-based dynamics. However, in this case we can exploit the symmetric structure of the Laplacian matrix $A$ to reduce the Riccati equation.
	
	\begin{prop}[Properties of the Differential Riccati Equation] 
		For the linear dynamics \eqref{lin_dynamics}, the solution of the Riccati equation \eqref{eq:Riccati} reduces to the solution of
		\begin{subequations}\label{eq:kd_ko}
			\begin{align}
				-\dot{k_d} &= 2k_da_d + 2(N-1)k_oa_o - \frac{N}{\nu}\left(k_d^2+(N-1)k_o^2\right) + \frac{1}{N}, 
				\\
				-\dot{k_o} &= 2(N-2)k_oa_o+2k_oa_d+2k_da_o - \frac{N}{\nu}\left(2k_dk_o+(N-2)k_o^2\right), 
			\end{align}
		\end{subequations}
	with terminal conditions $k_d(T)=k_o(T)=0$. The solution $K$ of the differential Riccati equation \eqref{eq:Riccati} corresponds to $(K)_{ij}=\delta_{ij}k_d+(1-\delta_{ij})k_o$.
	\end{prop}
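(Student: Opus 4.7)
The plan is to exploit the fact that both the Laplacian $A$ and the state weight $Q=\frac{1}{N}\textrm{Id}$ belong to the two-parameter family
$\mathcal{S}=\{M\in\R^{N\times N}:(M)_{ij}=\delta_{ij}m_d+(1-\delta_{ij})m_o,\ m_d,m_o\in\R\}$
of symmetric matrices with a common diagonal entry $m_d$ and a common off-diagonal entry $m_o$. I would first verify that $\mathcal{S}$ is a commutative subalgebra of $\R^{N\times N}$, closed under sum, transpose, and product: writing any $M\in\mathcal{S}$ as $(m_d-m_o)\textrm{Id}+m_o J$ with $J$ the all-ones matrix, and using $J^2=NJ$, a direct multiplication shows that $\mathcal{S}\cdot\mathcal{S}\subset\mathcal{S}$; commutativity follows at once from the symmetry.

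Since $A,Q\in\mathcal{S}$ and $B=\textrm{Id}\in\mathcal{S}$, the right-hand side of \eqref{eq:Riccati} maps $\mathcal{S}$ into itself. By existence and uniqueness of $C^1$ solutions of the Differential Riccati Equation on $[0,T]$, the unique solution must then remain in $\mathcal{S}$ for all $t\in[0,T]$, so the ansatz $K(t)_{ij}=\delta_{ij}k_d(t)+(1-\delta_{ij})k_o(t)$ is consistent, with $k_d(T)=k_o(T)=0$ encoding the terminal condition $K(T)=0$.

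The remaining step is bookkeeping of the diagonal and off-diagonal entries. Splitting the sums $\sum_k K_{ik}A_{kj}$ and $\sum_k K_{ik}K_{kj}$ into the cases $k=i$, $k=j$, and $k\neq i,j$ gives on the diagonal $(KA)_{ii}=k_da_d+(N-1)k_oa_o$ and $(K^2)_{ii}=k_d^2+(N-1)k_o^2$; off the diagonal, $(KA)_{ij}=k_da_o+k_oa_d+(N-2)k_oa_o$ and $(K^2)_{ij}=2k_dk_o+(N-2)k_o^2$. Using $A^\top K=KA$ (as $A,K$ are symmetric and commute) to double the first contribution, and adding the $\frac{1}{N}$ from $Q$ on the diagonal only, matching the two scalar entries on each side of \eqref{eq:Riccati} reproduces exactly system \eqref{eq:kd_ko}.

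The main obstacle is not a difficulty of proof but rather the algebraic observation that the Riccati flow preserves $\mathcal{S}$; once this is identified the reduction is essentially automatic. Care is only needed in tracking the three-case split in the off-diagonal computation of $KA$ and $K^2$, since an indexing slip would spoil the form of the $-\dot k_o$ equation.
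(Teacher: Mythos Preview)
Your proposal is correct and follows essentially the same route as the paper: assume the two-parameter ansatz for $K$ and compute the diagonal and off-diagonal entries of $KA$, $A^\top K$, $K^2$, $Q$. The paper's proof is in fact much terser than yours---it only records the identities $(K^2)_{ii}=k_d^2+(N-1)k_o^2$ and $(K^2)_{ij}=2k_dk_o+(N-2)k_o^2$ and leaves the rest implicit---so your explicit observation that $\mathcal{S}$ is a commutative subalgebra preserved by the Riccati flow, together with the appeal to uniqueness, actually fills in a justification that the paper omits.
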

	\begin{proof}
	Given the structure of the matrices $K$, $A$ and $Q$,  solving the Riccati equation \eqref{eq:Riccati} componentwise leads to the following identities:
		\begin{align*}
			\text{Diagonal entries $k_{ii}$}:\qquad& \frac{N}{\nu} (K^2)_{ii} = \frac{N}{\nu}\left(k_d^2+(N-1)k_o^2\right).\\
			\text{Off-diagonal entries $k_{ij}$:}\qquad& \frac{N}{\nu} (K^2)_{ij} = \frac{N}{\nu}\left(2k_dk_o+(N-2)k_o^2\right).
		\end{align*}
	\end{proof}
	
	We can further simplify the Riccati-matrix system \eqref{eq:kd_ko} using the dependency of coefficients $a_d,a_o$ \eqref{AB} and the parameter $\bar p$. This leads to
	\begin{align}
		-\dot{{k}}_d &= -\frac{2\bar{p}(N-1)}{N}({k}_d -k_o) - \frac{N}{\nu}\left({k}_d^2+(N-1){k}_o^2\right) + \frac{1}{N}, \qquad  k_d(T)=0 \label{kd0}\,,
		\\
		-\dot{{k}}_o &=  \frac{ 2\bar{p}}{N}(k_d-k_o) - \frac{N}{\nu}\left(2{k}_d {k}_o+(N-2){k}_o^2\right),\qquad  k_o(T)=0. \label{ko0}
	\end{align}

	Since we are interested in the dynamics for large number of agents, we introduce the following scalings
	\begin{equation}
		{k}_d \leftarrow N k_d, \quad {k}_o \leftarrow N^2 k_o,\quad \alpha(N) = \frac{N-1}{N}.
	\end{equation}
	 For the sake of simplicity, we keep the same notation also for the scaled variables $\kd,\ko$. Under this scaling the system \eqref{kd0}--\eqref{ko0} reads 
		\begin{align}
		-\dot\kd &= -2\bar{p}\alpha(N)\left(\kd - \frac{\ko}{N}\right) - \frac{1}{\nu}\left(\kd^2+\frac{\alpha(N)}{N}\ko^2\right) + 1, \qquad k_d(T)=0, \label{kd}
		\\
		-\dot\ko &=   2\bar{p}\left(\kd - \frac{\ko}{N}\right) - \frac{1}{\nu}\left(2\kd \ko+\alpha(N)\ko^2-\frac{1}{N}\ko^2\right),\qquad k_o(T)=0\,, \label{ko}
\end{align}	
	and the Riccati feedback law \eqref{eq:Riccati_ctrl} is given by
	\begin{align}
		u_i &= - \frac{1}{\nu}\left(\left(\kd-\frac{\ko}{N}\right) w_i(t) + \frac{\ko}{ N} \sum_{j=1}^N w_j(t)\right)\,.
	\end{align}
	Plugging the control  into the linear dynamics \eqref{lin_dynamics}  and rearranging the terms we have 
	\begin{equation}\label{RiccatiLinearDynamics}
		\begin{aligned}
			\dot{w}_i& = \left(\bar p -\frac{ k_o}{\nu}\right)\frac{1}{N}\sum_{j=1}^N w_j - \left(\bar p + \frac{\kd}{\nu} - \frac{\ko}{\nu N} \right)w_i,\qquad w_i(0)=v_i^0-\bar v.
		\end{aligned}
	\end{equation}
	
		The controlled dynamics \eqref{RiccatiLinearDynamics} are non-autonomous as the coefficients $ k_d(t), k_o(t)$, have to be determined offline by solving \eqref{kd}--\eqref{ko} backwards in time.
		
		In order to analyse the large-scale behaviour of the system we introduce the average of the agent states, and a weighted combination of the Riccati coefficients, respectively
		\begin{align*}
			m_w^N(t):=\frac1 N\sum_{j=1}^N w_j(t),\quad
			s(t) := \kd(t) +\alpha(N) \ko(t).
		\end{align*}
	 From \eqref{RiccatiLinearDynamics}, \eqref{kd} and \eqref{ko}, these quantities are governed by
		\begin{subequations}\label{eq:micro_average}
			\begin{align}
				\dot m_w^N(t)  &= -\frac{1}{\nu} s(t) m_w^N(t),\qquad m^N_w(0)=m^N_v(0)-\bar v,\\
				-\dot s(t)     &= 1 - \frac{1}{\nu} s(t)^2,\qquad\quad s(T) = 0.
			\end{align}
		\end{subequations}
	 The second equation has an explicit solution $s(t) = \sqrt{\nu}\tanh((T-t)/\sqrt{\nu})$, always non-negative for $t\in[0,T]$ \cite{herty2015mean}. The average $m^N_v(t)$ follows a relaxation towards  $\bar v$
		\begin{align}\label{eq:ave}
			\dot m_v^N(t)  = -\frac{1}{\sqrt{\nu}} \tanh\left(\frac{T-t}{\sqrt{\nu}}\right) (m_v^N(t)-\bar v).
		\end{align}

	\begin{remark}[Second order dynamics]\label{rmk_second order} This approach can be extended to second order models \cite{cucker2007emergent,d2006self,motsch2014heterophilious}. Here, the state space of a swarm of $N$ agents is characterized by position and velocities $(x_i(t),v_i(t))_i\in \mathbb{R}^{2\times d}$, evolving according to 
		\begin{equation}\label{order2}
				\dot{x}_i= v_i, \qquad \dot{v}_i= \frac{1}{N}\sum_{j=1}^N P(x_i,x_j)(v_j-v_i)+ u_i,\qquad i = 1,\ldots,N\,.
		\end{equation}
		where $u\in \mathbb{R}^{N\times d}$. We consider again a functional of type \eqref{eq:runcost}, where we enforce a consensus point $v_i=v_j=\tilde{v}$ for every $i,j$. Linearizing around this point and introducing the shift $y_i = x_i-\bar v t, \quad w_i = v_i -\bar v$, the system is transformed into
		\begin{equation}
			\begin{bmatrix} \dot y \\ \dot w \end{bmatrix}
			= \begin{bmatrix} 0 & \textrm{Id} \\ 0 & A \end{bmatrix} \begin{bmatrix} y \\ w \end{bmatrix}
			+ \begin{bmatrix} 0 \\\textrm{Id} \end{bmatrix} u\,.
		\end{equation}
		This second-order system system is controllable \cite{herty2018suboptimal}, and the associated Differential Riccati Equation reads
		\begin{equation}
			\begin{aligned}
				\begin{bmatrix} \dot K_{11} & \dot K_{12} \\ \dot K_{21} & \dot K_{22} \end{bmatrix}
				= &\begin{bmatrix} 0 & 0 \\ 0 & K_{21} + K_{22}A \end{bmatrix}
				+ \begin{bmatrix} 0 & 0 \\ K_{11} + A K_{21} & K_{12} + AK_{22} \end{bmatrix} \\
				&- \frac{N}{\nu} \begin{bmatrix} K_{12} K_{21} & K_{12} K_{22} \\ K_{22} K_{21} & (K_{22})^2 \end{bmatrix}
				+ \begin{bmatrix} 0 & 0 \\ 0 & \textrm{Id} \end{bmatrix} ,
			\end{aligned}
		\end{equation}
		with terminal conditions $K_{ij}(T) = 0$, for $\ i,j = 1,2$. This system is easily solved with $K_{11} = K_{12} = K_{21} = 0$ and $K_{22}$ satisfying a Riccati equation equivalent to \eqref{eq:Riccati}. Hence, the results we obtain for the first order system can be extended to second order systems. We will further discuss this extension in the numerical section.
	\end{remark}

	\subsection{Riccati-based control laws for the non-linear system} \label{Riccati controls for the non-linear system}
	In order to approximate the synthesis of feedback laws for the original nonlinear optimal control problem \eqref{eq:func}, we study sub-optimal stabilizing strategies induced by the Riccati control \eqref{eq:Riccati_ctrl}. 
	Without loss of generality, we will consider the stabilization problem towards $\tilde{v} = 0$. We will focus on different ways to synthesize a control law based on the information retrieved in the linearized case: a closed-loop control, an open-loop strategy, and a simplified inexact open-loop control. 
	
	\paragraph{Closed-loop control} A well-known local strategy for the control of nonlinear dynamics is to use the optimal feedback control obtained from the linearized dynamics. In this case, the controlled system reads
	\begin{equation}\label{FBA}
		\dot{v}_i = \frac{1}{N}\sum_{j=1}^N P(v_i,v_j)(v_j-v_i) + R_i[v](t),\qquad v_i(0)=v^0_i,
	\end{equation}
	where the operator $R_i[\cdot]$ is the feedback \eqref{eq:Riccati_ctrl} applied directly to the state of the non-linear system $v(t)=(v_i(t))_{i=1}^N$, namely
	\begin{align}
		R_i[v](t) &= - \frac{1}{\nu}\left(\left(\kd(t)-\frac{\ko(t)}{N}\right) v_i(t) + \frac{\ko(t)}{N} \sum_{j=1}^N v_j(t)\right),
	\end{align}
	where $\kd(t),\ko(t)$ are still obtained by solving the system \eqref{kd}--\eqref{ko}. In general, such a control law is expected to work only for initial states sufficiently close to the state around which the dynamics have been linearised. We shall investigate in detail the properties of the closed-loop in the following section.
	\paragraph{Open-loop control} The open-loop strategy we propose applies the control signal obtained from the linear synthesis $u_i(t;v^0)$ directly into the  the non-linear dynamics as follows
	\begin{subequations}\label{FFA} 
		\begin{align}
			\dot{v}_i &= \frac{1}{N}\sum\limits_{j=1}^N P(v_i,v_j)(v_j-v_i) +  R_i[w](t), \qquad v_i(0)=v^0_i,
			\\
			\dot{w}_i &= \frac{1}{N}\sum\limits_{j=1}^N\bar p(w_j-w_i) + R_i[w](t),\qquad\quad \qquad w_i(0)=v^0_i,
		\end{align}
	\end{subequations}
	where the control $ R_i[w](t)$ is computed according to \eqref{eq:Riccati_ctrl}.
	This approach is open-loop, since all the information on the state of the non-linear system reduces to the initial state of linearized system, assuming $w^0_i=v^0_i$. While this approach is clearly outperformed by the closed-loop feedback law in terms of robustness, it has the advantage that it can be implemented without requiring a continuous measurement of the full nonlinear state $v(t)$, making it appealing for systems where recovering the true state of the dynamics can be expensive or time-consuming.
	\paragraph{Inexact open-loop control}
	An inexact, but simpler, implementation of the open-loop approach \eqref{FFA} obtained when the control $R_i[\cdot]$ is evaluated only with respect to the initial data $v^0_i$, that is $R_i[v^0](t)$, where the dependence on $t$ is limited to $\ko(t)$ and $\kd(t)$. This setting avoids the evaluation of system in \eqref{FFA}, and only requires the computation of 
	\begin{equation}\label{FFA_0} 
		\dot{v}_i = \frac{1}{N}\sum_{j=1}^N P(v_i,v_j)(v_j-v_i) +  R_i[v^0](t), \qquad v_i(0)=v^0_i.
	\end{equation}
	The open-loop control laws are meant to be embedded in a Model Predictive Control framework, ensuring a sufficiently frequent update of the state of the nonlinear system ensuring stability of the resulting control system. This shall be further analysed in Section \ref{sec:MdPC}. In the following section we will study the performance of these control strategies when stabilizing the non-linear dynamics in the case $N\gg1$.

	\section{Mean-field limits and moments estimates}\label{sec:mean-field}
	
	The stabilization strategies \eqref{FBA} and \eqref{FFA} are clearly suboptimal with respect to the original optimal control problem, and in general will not guarantee the stabilization of the non-linear dynamics \eqref{nonlin_dynamics}. In this section we quantify the discrepancy between the desired target state and the final state obtained by the stabilization strategies  \eqref{FBA} and \eqref{FFA}. In order to estimate these performances in the case where a large number of agents is present, i.e. $N\gg 1$, we discuss our approaches in the {\em mean-field limit}.  Here, we consider the density distribution of agents in order to describe the collective behavior of the ensemble of particles, and we retrieve upper and lower bounds for the decay of the mean-field density towards the desired configuration.

	\subsection{Open-loop Riccati control}
	We introduce the empirical joint probability distribution of particles for the system \eqref{nonlin_dynamics} and \eqref{lin_dynamics} is given by
	\begin{equation}
		\lambda^N(t,v,w) = \frac{1}{N} \sum_{i=1}^N \delta (v-v_i(t)) \delta(w-w_i(t)),
	\end{equation}
	where $\delta(\cdot)$ is a Dirac measure. We assume enough regularity on the interaction kernel, assuming that particles remain in a fixed compact domain for all $N$ and in the whole time interval $[0,T]$. We refer to \cite{canizo2011well,carrillo2014derivation} for a rigorous treatment of the mean-field limit of interacting particle systems.
	Hence, we introduce the test function $\phi(v,w) \in C^1_0(\mathbb{R}^{2d})$ and by Liouville's theorem we compute the time variation of the inner-product $\langle \lambda^N(t),\phi\rangle$, given by
	\begin{equation}
		\frac{d}{dt} \langle \lambda^N(t),\phi\rangle = \frac{1}{N} \sum_{i=1}^N (\nabla_v \phi(v_i,w_i) \cdot \dot{v}_i(t) + \nabla_w \phi(v_i,w_i) \cdot\dot{w}_i(t)).
	\end{equation}
Denoting by $\lambda^N(t):=\lambda^N(t,v,w)$, we define the marginal densities $f^N$, $g^N$, the average $m_1[f^N]$ and the second moment $m_2[f^N]$ as follows
	\begin{equation}\label{eq:marginals}
		\begin{aligned}
			f^N(t,w)&:=\int_{\R^{d}} \lambda^N (t,v,w) dv,
	        \qquad g^N(t,v):=\int_{\R^{d}} \lambda^N (t,v,w) dw;
	 \cr
			m_1[f^N](t)&:=\int_{\R^{d}} w f^N(t,w)dw\,,\quad m_2[f^N](t):=\int_{\R^{d}} |w|^2 f^N(t,w)dw\,.
		\end{aligned}
	\end{equation}
	With the standard derivation of the mean-field limit, we obtain in the strong form the evolution equation for $\lambda^N(t,v,w)$ as follows
	\begin{align*}
		\partial_t \lambda^N &= -\nabla_v \cdot \left[\lambda^N \left( \mathcal P[g^N]- a^N(t) w -b^N(t)m_1[f^N]\right)\right]\\
		\quad&\qquad-\nabla_w \cdot \left[ \lambda^N \left(\bar p(m_1[f] - w)- a^N(t) w -b^N(t)m_1[f^N] \right) \right],
	\end{align*}
	where $\mathcal{P}[g]$ denotes the nonlocal integral operator
	\[
	\mathcal{P}[g](v,t)  =\int_{\R^d}P(v,v_*)(v_*-v) g(v_*,t) dv_*,
	\]
	and  $a^N(t)$ and $b^N(t)$ are obtained from the scaled Riccati system \eqref{kd}-\eqref{ko}.
Since we are interested in the limit of a large number of agents, for $N\to\infty$ we have
	\[
	\lim_{N\rightarrow \infty } a^N(t)= \frac{k_d(t)}{\nu},\qquad \lim_{N\rightarrow \infty }b^N(t)= \frac{k_o(t)}{\nu},
	\]
	where $\kd$ and $\ko$ fulfill
	\begin{equation}
		\begin{aligned}\label{ko_kd_limit}
			-\dot\kd=  -2\bar{p}\kd -\frac{k_d^2}{\nu}+1, \qquad
			-\dot\ko=   2\bar{p}\kd- \frac{\ko}{\nu}\left(2\kd+\ko\right)\,.
		\end{aligned}
	\end{equation}
Then, the joint mean-field model can be conveniently written as
	\begin{equation}
		\begin{aligned}\label{eq:jointmean-field}
			\partial_t \lambda &= -\nabla_v \cdot\left[ \lambda \left( \mathcal P[g]- \frac{k_d}{\nu} w -\frac{k_o}{\nu}m_1[f]\right)\right]\cr
			&\qquad\qquad-\nabla_w \cdot \left[\lambda\left(\bar p\left(m_1[f]-w\right) - \frac{k_d}{\nu} w -\frac{k_o}{\nu}m_1[f])\right) \right],
		\end{aligned}
	\end{equation}
	with intial data $\lambda(v,w,0) = \lambda^0(v,w)$.
	Integrating  the mean-field  equation \eqref{eq:jointmean-field} with respect to $v$ and $w$, the evolution of the  marginals $f(w,t)$ and $g(v,t)$ is given by
	\begin{subequations}\label{eq:mean-field_OL}
		\begin{align}\label{eq:mean-field_nonlinear}
			&\partial_t g = - \nabla_v \cdot\left[ g\left(\mathcal{P}[g] - \frac{\kd}{\nu} m_1[h] -\frac{\ko}{\nu}  m_1[f]\right) \right], \qquad\qquad\,\, g(v,0)=g^0(v) \\
			&\partial_t f = - \nabla_w \cdot \left[f \left(\left(\bar p-\frac{k_o}{\nu}\right)m_1[f] - \left(\bar p+\frac{k_d}{\nu}\right)w\right)\right],\qquad f(w,0)=g^0(w)\label{eq:mean-field_Linear}
		\end{align}
	\end{subequations}
 with $m_1[h](v,t)$ being the average of the conditional probabilty $h(w|v,t)$ defined as
	\begin{align}\label{eq:cond}
		\lambda(t,v,w) = h(t,w|v)g(t,v),\qquad m_1[h](t,v) = \int_{\R^d} w h(t,w\vert v) dw.
	\end{align}
	We observe that equation \eqref{eq:mean-field_Linear} is the mean-field equation associated to the linear controlled model \eqref{RiccatiLinearDynamics}  \cite{herty2015mean}. On the other hand, the equation for the non-linear model \eqref{eq:mean-field_nonlinear} is coupled to the solution of the linear model through the control.  
\begin{remark}[Inexact mean-field open-loop control]
		If we consider the inexact open-loop control  \eqref{FFA_0}, where the control acts only on the information measured at the initial time $t_0$,
		we derive a consistent mean-field limit following the procedure described in this section. The mean-field equation for $\lambda(v,w,t)$ reads 
		\begin{equation}
			\begin{aligned}\label{eq:jointmean-field_0}
				\partial_t \lambda &= -\nabla_v \cdot\left[ \lambda \left( \mathcal P[g]- \frac{k_d}{\nu} w -\frac{k_o}{\nu}m_1[g^0]\right)\right],\qquad \lambda(v,w,0) = \lambda^0(v,w).
			\end{aligned}
		\end{equation}
	The marginal distributionscorresponds to the initial data $g^0(w)$ and $g(t,v)$ respectively, and the system \eqref{eq:mean-field_OL} reduces then to the equation
		\begin{align}\label{eq:mean-field_nonlinear_0}
			&\partial_t g = - \nabla_v \cdot\left[ g\left(\mathcal{P}[g] - \frac{\kd}{\nu} m_1[h] -\frac{\ko}{\nu}  m_1[g^0]\right) \right], \qquad\,\, g(v,0)=g^0(v).
		\end{align}	
	\end{remark}
	\subsubsection{Bounds of decay moments}
	We are interested in the evolution of the first moment and variance of the nonlinear mean-field density $g(t,v)$, denoted by $m_1[g](t)$ and $\sigma^2[g](t)$, respectively. Stabilizing the system towards target consensus point $\bar v =0$ requires estimates on the decay of moments towards zero.  We  assume the kernel $P(\cdot,\cdot)$ to be a symmetric and bounded function, namely
	\begin{align}\label{eq:assumption1}\tag{\textbf{A}}
		P(v,v_*) = P(v_*,v),\qquad P(v,v_*)\in [-a,b],\qquad \forall v,v_*\in\R^{d}, \qquad a,b\geq 0.\qquad 
	\end{align}
    Hence we have the following results
    	\begin{lem}\label{lem:sigmaFFA} Let assumption \eqref{eq:assumption1} holds for the interaction kernel $P(\cdot)$, then the average of non-linear model \eqref{eq:mean-field_nonlinear} decays as follows
    		\begin{align}\label{eq:mf_linear_average}
    		\frac{d}{dt}m_1[g] = -\frac{\kd+\ko}{\nu}m_1[g],\qquad m_1[g](0) = m_1[g^0],
    		\end{align}
    		and the average of the linear model $m_1[f](t)$ coincides with $m_1[g](t)$ for $t\geq0$.
    		
    		The variance variance $\sigma^2[g](t)$ evolves according to
    			\begin{equation} \label{sigma}
    		\frac{d}{dt}\sigma^2[g] = -\int_{\R^{2d}}\!\!\!\!\!\vert v-v_*\vert^2 P(v,v_*) g(v) g(v_*) dv dv_* - 2 \frac{\kd}{\nu} \varrho[f,g] \sqrt{\sigma^2[f] \sigma^2[g]},
    		\end{equation}
    		with intial data $\sigma^2[g](0)=\sigma^2[g^0]$, where $\varrho[f,g]$ is the correlation coefficient. Moreover the variance of the linear model $\sigma[f]$ satisfies
    	    \begin{equation}\label{sigmaf}
    		\frac{d}{dt}{\sigma}^2[f]  = -2 \Big( 2 \bar{p} + \frac{\kd}{\nu} \Big)\sigma^2[f],\qquad \sigma[f^0]=\sigma[g^0].
    		\end{equation}
    		\end{lem}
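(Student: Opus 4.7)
The plan is to derive all three identities by testing the mean-field PDEs \eqref{eq:mean-field_nonlinear} and \eqref{eq:mean-field_Linear} against the monomials $1$, $v$, and $|v|^2$ (resp.\ $w$, $|w|^2$), integrating by parts, and exploiting the symmetry assumption \eqref{eq:assumption1} to kill or symmetrize the nonlocal term.

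First I would compute $\frac{d}{dt}m_1[g]$ by multiplying \eqref{eq:mean-field_nonlinear} by $v$ and integrating over $\R^d$. One integration by parts produces three contributions: (i) the interaction term $\int v\cdot g(v)\,\mathcal{P}[g](v)\,dv=\iint v\cdot P(v,v_*)(v_*-v)\,g(v)g(v_*)\,dv\,dv_*$, which vanishes after swapping $(v,v_*)\leftrightarrow(v_*,v)$ and using the symmetry of $P$; (ii) the term $-\frac{k_d}{\nu}\int v\,m_1[h](v)\,g(v)\,dv$, which using the conditional decomposition \eqref{eq:cond} equals $-\frac{k_d}{\nu}\iint w\,\lambda(v,w)\,dv\,dw=-\frac{k_d}{\nu}m_1[f]$; (iii) the term $-\frac{k_o}{\nu}m_1[f]\int g\,dv=-\frac{k_o}{\nu}m_1[f]$. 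Next, testing \eqref{eq:mean-field_Linear} against $w$ gives $\dot m_1[f]=-\frac{k_d+k_o}{\nu}m_1[f]$. Since $f(\cdot,0)=g^0$ implies $m_1[f](0)=m_1[g^0]=m_1[g](0)$, Grönwall / uniqueness for the linear ODE yields $m_1[f]\equiv m_1[g]$, and substituting back gives \eqref{eq:mf_linear_average}.

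For the variance \eqref{sigma} I would write $\sigma^2[g]=m_2[g]-|m_1[g]|^2$ and differentiate. Testing \eqref{eq:mean-field_nonlinear} against $|v|^2$ produces $\dot m_2[g]=2\int v\cdot g(\mathcal{P}[g]-\tfrac{k_d}{\nu}m_1[h]-\tfrac{k_o}{\nu}m_1[f])\,dv$. The interaction piece $2\iint v\cdot P(v,v_*)(v_*-v)g g_*\,dv\,dv_*$ symmetrizes to $-\iint|v-v_*|^2 P(v,v_*)g(v)g(v_*)\,dv\,dv_*$, which is the key step and the only real calculational subtlety. The control piece becomes $-2\tfrac{k_d}{\nu}\iint v\cdot w\,\lambda(v,w)\,dv\,dw-2\tfrac{k_o}{\nu}m_1[f]\cdot m_1[g]$; rewriting $\iint v\cdot w\,\lambda\,dv\,dw=\varrho[f,g]\sqrt{\sigma^2[f]\sigma^2[g]}+m_1[f]\cdot m_1[g]$ via the definition of the correlation coefficient isolates the covariance contribution. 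Subtracting $2m_1[g]\cdot\dot m_1[g]=-2\frac{k_d+k_o}{\nu}|m_1[g]|^2$ and using $m_1[f]=m_1[g]$ from the previous step makes all purely first-moment terms cancel exactly, leaving precisely \eqref{sigma}.

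Finally, \eqref{sigmaf} follows by the same procedure applied to the purely linear equation \eqref{eq:mean-field_Linear}: testing against $|w|^2$ gives $\dot m_2[f]=2(\bar p-\tfrac{k_o}{\nu})|m_1[f]|^2-2(\bar p+\tfrac{k_d}{\nu})m_2[f]$, and combining with $\dot m_1[f]=-\tfrac{k_d+k_o}{\nu}m_1[f]$ collapses to a closed linear ODE for $\sigma^2[f]$ alone, whose coefficient is the one stated in the lemma; coincidence of initial data then completes the proof. The hardest step of the whole argument is the symmetrization of the nonlocal interaction in $\dot m_2[g]$ together with the bookkeeping needed to recognize the covariance term through the conditional-marginal identity \eqref{eq:cond}; everything else is mechanical integration by parts.
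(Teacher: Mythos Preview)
Your approach is essentially identical to the paper's: test the mean-field equations against $v$ and $|v|^2$, use the symmetry of $P$ to kill or symmetrize the interaction term, use the conditional decomposition $\lambda=h\,g$ to convert the $m_1[h]$-term into $m_1[f]$, and recognize the cross-term $\iint v\cdot w\,\lambda$ via the correlation coefficient; the paper obtains \eqref{sigmaf} by specializing \eqref{sigma} to $P\equiv\bar p$ together with the identity $\iint|w-w_*|^2ff_*\,dw\,dw_*=2\sigma^2[f]$ rather than by a separate moment calculation, but this is only a cosmetic difference.

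One notational slip to correct: in your first-moment computation the three contributions after integration by parts should \emph{not} carry an extra factor of $v$. What vanishes by symmetry is $\iint P(v,v_*)(v_*-v)g(v)g(v_*)\,dv\,dv_*$, not $\iint v\cdot P(v,v_*)(v_*-v)gg_*\,dv\,dv_*$ (the latter equals $-\tfrac12\iint|v-v_*|^2Pgg_*$ and is generically nonzero); likewise item (ii) should read $-\tfrac{k_d}{\nu}\int m_1[h](v)\,g(v)\,dv$. Since your stated conclusions for (i)--(iii) are the correct ones, this is a writing error rather than a gap in the argument.
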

	\begin{proof}
       By construction of model \eqref{eq:mean-field_OL} the first moment at time zero coincides, i.e. $m_1[f^0] = m_1[g^0]$. The first moment of $g(t,v)$ satisfies
	\begin{align*}
		\frac{d}{dt}m_1[g] &= \int_{\R^{2d}}\!\!\!\!\!P(v,v_*)(v_*-v)gg_*dvdv_* -\frac{\kd}{\nu}\int_{\R^d} m_1[h](v)g\ dv-\frac{\ko}{\nu}m_1[f]\cr
		& = -\frac{\kd}{\nu}\int_{\R^{2d}}\!\!\!\!\!w \lambda(v,w)\ dvdw-\frac{\ko}{\nu}m_1[f]=-\frac{\kd+\ko}{\nu}m_1[f],
	\end{align*}
	where we omitted time dependencies and we denote by $gg_*$ the product of $g(t,v)$ and $g(t,v_*)$.	The last two equalities follow from the symmetry of $P(\cdot)$ and the definition of joint distribution. 
	
	The second moment of $g(t,v)$, $m_2[g](t)$ satisfies the following equation
	\begin{align*}
		\frac{d}{dt}{m}_2[g]
		=&\! -\int_{\R^{2d}}\!\!\!\!\!\vert v-v_*\vert^2 P(v,v_*) g g_* dv dv_* - 2 \frac{\ko}{\nu}m_1[g]m_1[f] -2\frac{\kd}{\nu} \int_{\R^{2d}}\!\!\!\!\!v w \lambda(v,w) dv dw,
		\\
		=& -\!\int_{\R^{2d}}\!\!\!\!\!\vert v-v_*\vert^2 P(v,v_*) g g_* dv dv_*- 2\frac{\kd\!+\!\ko}{\nu} |m_1[g]|^2 \!\!- 2\frac{\kd}{\nu}\varrho[f,g] \sqrt{\sigma^2[f] \sigma^2[g]}
	\end{align*}
	where we used the equivalence of $m_1[g]\equiv m_1[f]$ and the relation between the correlation coeffiecient $\varrho[f,g]$ and the covariance between $f$ and $g$, namely
	\begin{align*}
	\varrho[f,g] \sqrt{\sigma^2[f] \sigma^2[g]} =	\int_{\R^{2d}}\!\!\!\!\!v w \lambda(v,w) dv dw - m_1[f] m_1[g].
	\end{align*}
By observing that 
\begin{equation*}
		\frac{d}{dt}|m_1[g]|^2 = - 2 \frac{\kd + \ko}{\nu} |m_1[g]|^2
\end{equation*}
and using the definition $\sigma^2[g] = m_2[g] - \vert m_1[g] \vert^2$, we obtain the equation for the variance of the non-linear model \eqref{sigma}.
The variance for the linear model is obtained directly from \eqref{sigma}, imposing $P(v,v_*)=\bar p$ and observing that the double integral becomes
\begin{equation}\label{equality2}
\int_{\R^{2d}}\!\!\!\!\!\vert w-w_*\vert^2 f f_* dw dw_* = 2 \sigma^2[f].
\end{equation}
\end{proof}
From the evolution equation of the variance $\sigma[g](t)$ we retrieve  the following estimates on the decay.
	\begin{prop}\label{prop:Bounds}
		Under assumption \eqref{eq:assumption1} on the interaction kernel $P(\cdot)$, we have the following lower and upper bounds for the evolution of the variance $\sigma^2[g]$:
		\begin{equation}\label{ul_bounds}
			\begin{split}
				\sigma^2[g^0]e^{-2bt} \left(1 - B^+_b(0,t)\right)^2
				\leq
				\sigma^2[g](t)
				\leq
				\sigma^2[g^0]e^{2at} \left( 1 + B^-_a(0,t) \right)^2,\quad\text{where}
			\end{split}
		\end{equation}
		\begin{equation} \label{B}
			\begin{split}
				B^\pm_c(t_0,t) = \frac{1}{\nu}\int_{t_0}^{t-t_0}  \beta(s-t_0)\kd(s)e^{\pm c (s-t_0)} ds,\cr
				\beta(t-t_0) = \exp\left\{-2 \bar{p}(t-t_0)-\frac{1}{\nu} \int_{t_0}^{t-t_0} \kd(r) dr\right\}.
			\end{split}
		\end{equation}
	\end{prop}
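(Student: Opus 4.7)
The plan is to reduce the variance ODE \eqref{sigma} for $\sigma^2[g]$ to a scalar first-order linear differential inequality for $\sigma[g](t):=\sqrt{\sigma^2[g](t)}$, which can then be integrated explicitly via an integrating factor to yield both bounds simultaneously.

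First I would bound the two terms on the right-hand side of \eqref{sigma}. Since $-a\le P(v,v_*)\le b$ by \eqref{eq:assumption1} and $\int_{\R^{2d}}|v-v_*|^2 gg_*\,dvdv_*=2\sigma^2[g]$ (cf.\ the derivation of \eqref{equality2} with $g$ in place of $f$), the interaction contribution satisfies
\begin{equation*}
-2b\,\sigma^2[g]\;\le\;-\int_{\R^{2d}}|v-v_*|^2 P(v,v_*)\,gg_*\,dv\,dv_*\;\le\;2a\,\sigma^2[g].
\end{equation*}
For the cross term I would use $|\varrho[f,g]|\le 1$ (Cauchy--Schwarz) together with the explicit integration of \eqref{sigmaf}, which gives $\sqrt{\sigma^2[f](t)}=\sigma[g^0]\,\beta(t)$ for $\beta$ as in \eqref{B}; since $\kd(t)\ge 0$ on $[0,T]$ (it solves the scalar Riccati equation in \eqref{ko_kd_limit} with zero terminal value), this term is sandwiched between $\pm 2(\kd/\nu)\sigma[g^0]\beta(t)\sigma[g]$.

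Combining these estimates and using $\tfrac{d}{dt}\sigma^2[g]=2\sigma[g]\tfrac{d}{dt}\sigma[g]$ (at points where $\sigma[g]>0$), I obtain the pair of scalar linear inequalities
\begin{equation*}
-b\,\sigma[g]-\tfrac{\kd}{\nu}\sigma[g^0]\beta(t)\;\le\;\tfrac{d}{dt}\sigma[g]\;\le\;a\,\sigma[g]+\tfrac{\kd}{\nu}\sigma[g^0]\beta(t).
\end{equation*}
Multiplying by the integrating factors $e^{-at}$ and $e^{bt}$ respectively and integrating from $0$ to $t$ against the initial condition $\sigma[g](0)=\sigma[g^0]$ gives
\begin{equation*}
\sigma[g^0]\,e^{-bt}\bigl(1-B^+_b(0,t)\bigr)\;\le\;\sigma[g](t)\;\le\;\sigma[g^0]\,e^{at}\bigl(1+B^-_a(0,t)\bigr),
\end{equation*}
with $B^\pm_c$ exactly as defined in \eqref{B}. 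Squaring then yields the stated bounds \eqref{ul_bounds}.

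The main technical subtlety I expect is in the lower estimate: squaring preserves the inequality only on the time interval where $1-B^+_b(0,t)\ge 0$ (outside of which the bound is trivially true and may be stated with a zero in place of the squared bracket), and the division by $\sigma[g]$ in passing from the variance to the standard deviation must be justified, which I would do via the standard regularisation $\sigma[g]\rightsquigarrow\sqrt{\sigma^2[g]+\epsilon}$ followed by $\epsilon\to 0$. The non-negativity of $\kd$ and the explicit form of $\beta$ from \eqref{sigmaf} are then what make the two integrating-factor arguments symmetric and lead to the closed-form expressions for $B^\pm_c$.
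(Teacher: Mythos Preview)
Your proposal is correct and follows essentially the same route as the paper: bound the interaction integral via \eqref{eq:assumption1} and the identity \eqref{equality2}, control the cross term using $|\varrho[f,g]|\le 1$ together with the explicit solution $\sigma[f](t)=\sigma[g^0]\beta(t)$ of \eqref{sigmaf}, then pass to a scalar linear differential inequality for $\sigma[g]$ and integrate (the paper phrases this last step as an application of Petrovitsch's comparison theorem rather than an integrating factor, but it is the same computation). Your remarks on the sign condition $1-B_b^+(0,t)\ge 0$ and on regularising the division by $\sigma[g]$ are technical points the paper leaves implicit.
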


	\begin{proof}			
		 Consider first the case $P(v,w) \geq -a$.
			We bound from below the interaction kernel in equation \eqref{sigma}, 
			\begin{align*}
				\frac{d}{dt}{\sigma}^2[g](t)& \leq 2 a\int_{\R^{2d}}\!\!\!\!\!|v-v_*|^2gg_*dvdv_* - 2 \frac{\kd}{\nu} \varrho[f,g] \sqrt{\sigma^2[f] \sigma^2[g]}\cr
				& \leq 2 a \sigma^2[g] - 2 \frac{\kd}{\nu} \sqrt{\sigma^2[f] \sigma^2[g]}.
			\end{align*}
			where we first used the identity \eqref{equality2} and $|\varrho|\leq 1$.
			In order to estimate the growth of the right hand side we note that because of \eqref{sigmaf}, $\sigma[f](t)$ is given by
			\begin{align*} 
				\sigma^2[f](t) =& \sigma^2[g^0] \exp{\left\{-4\bar{p}t-\frac{2}{\nu}\int_0^t \kd(s)ds\right\}}=: \sigma^2[g^0]\beta(t)^2.
			\end{align*}
			 Substituting the estimate in the previous equation we obtain
			\begin{align*}
				\frac{d}{dt}{\sigma}^2[g](t)
				\leq& 2 a\sigma^2[g](t) + \frac{2\kd}{\nu}\beta(t)\sqrt{\sigma^2[g^0]\sigma^2[g](t)}.
			\end{align*}
			An estimate can be obtained for $z \neq 0$:
			\begin{align*}
				\frac{d}{dt}z(t)
				=& az(t) + \frac{\kd}{\nu}\beta(t)\sqrt{\sigma^2[g^0]}.
			\end{align*}
			This first-order linear differential equation admit an exact solution as follows
			\begin{equation} \label{z_eq}
				\begin{split}
					z(t) &= z(0) e^{at} \left( 1 + \frac{1}{\nu} \int_{0}^t e^{-as} \kd(s) \beta(s) ds
					\right).
				\end{split}
			\end{equation}
			Applying the Petrovitsch's theorem \cite{Petrovitch1901}, we obtain the upper variance bound
			\begin{equation}
				\begin{split}
					\sigma^2[g](t) &\leq \sigma^2[g^0]e^{2at} \left( 1 + \frac{1}{\nu} \int_{0}^t e^{-as} \kd(s) \beta(s) ds
					\right)^2.
				\end{split}
			\end{equation}

			We continue with the case $P(v,w) \leq b$. Bounding from above $P(\cdot,\cdot)$ in \eqref{sigma} we have
			\begin{align*}
				\frac{d}{dt}{\sigma}^2[g](t)
				& \geq -2 b\sigma^2[g](t) - 2\frac{\kd}{\nu}\sqrt{\sigma^2[f]
					\sigma^2[g]}.
			\end{align*}
			Solving exactly \eqref{sigmaf}, and substituting  $\sigma[f](t)$ into the previous equation we obtain
			\begin{align*}
				\frac{d}{dt}{\sigma}^2[g](t)
				\geq& -2 b\sigma^2[g](t) - \frac{2\kd}{\nu}\beta(t)\sqrt{\sigma^2[g^0]\sigma^2[g](t)}.
			\end{align*}
			Proceeding as in equation \eqref{z_eq} leads to
			\begin{equation}
				\begin{split}
					\sigma^2[g](t) &\geq \sigma^2[g^0]e^{-2bt} \left( 1 - \frac{1}{\nu} \int_{0}^t e^{bs} \kd(s) \beta(s) ds
					\right)^2.
				\end{split}
			\end{equation}
		
	\end{proof}
	
	Figure \ref{fig:FFAbound} shows two examples for the decay of $\sigma^2[g]$ and the bounds for the kernel
	\begin{equation}\label{eq:kernel_P}
		P(v,w) = \alpha + \frac{K}{(\varsigma+|v-w|^2)^{\gamma}},\qquad \alpha,\varsigma, \gamma\geq0,\textrm{ and } K\in\mathbb{R}\,,
	\end{equation}
associated to Cucker-Smale consensus dynamics \cite{cucker2007emergent}. On the left, we consider the attractive case where $P(\cdot)$ is positive and bounded in $[0,1]$,  with $\alpha = 0, \varsigma = 1, K=1$ and $\gamma=2$. On the right, we show an attraction-repulsion dynamics with kernel $-1\leq P(\cdot,\cdot)\leq 9$  where $\alpha = 9, \varsigma = 0.1, K=-1$ and $\gamma=1$.
	The value of $\sigma_g^2$ is computed integrating numerically a mean-field approximation of the nonlinear dynamic equation. In both cases, the initial density of particle $g^0(v)$ is
	\[g^0(v)=\frac{2}{3}\chi_{[1/4,7/4]}(v).\].	
	\begin{figure}[!ht]
		\begin{center}
			\begin{tabular}{c c}
				\textbf{$0\leq P(v,w)\leq1$} & \textbf{$-1\leq P(v,w)\leq 9$} \\
				\includegraphics[width=0.46\linewidth]{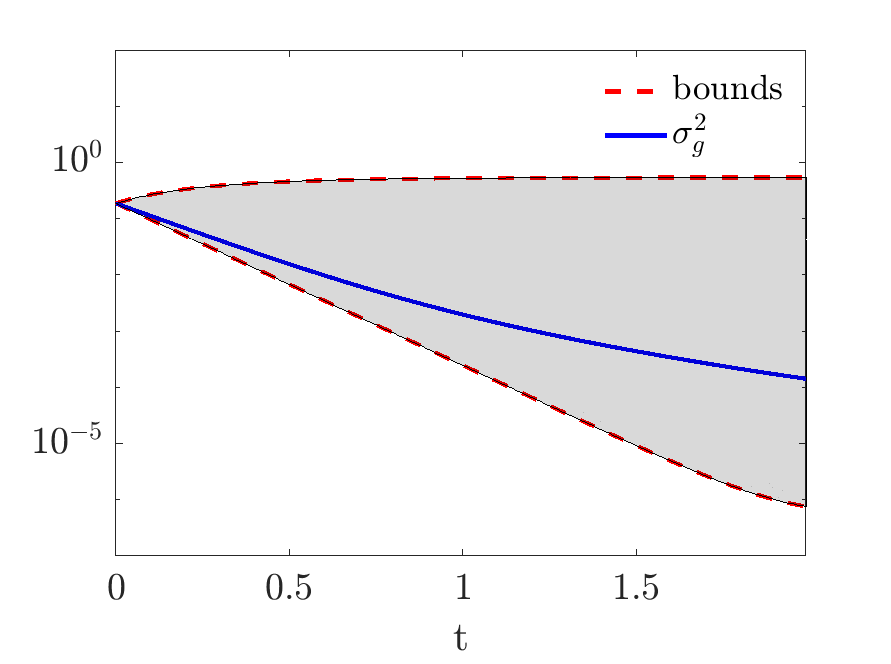} &
				\includegraphics[width=0.46\linewidth]{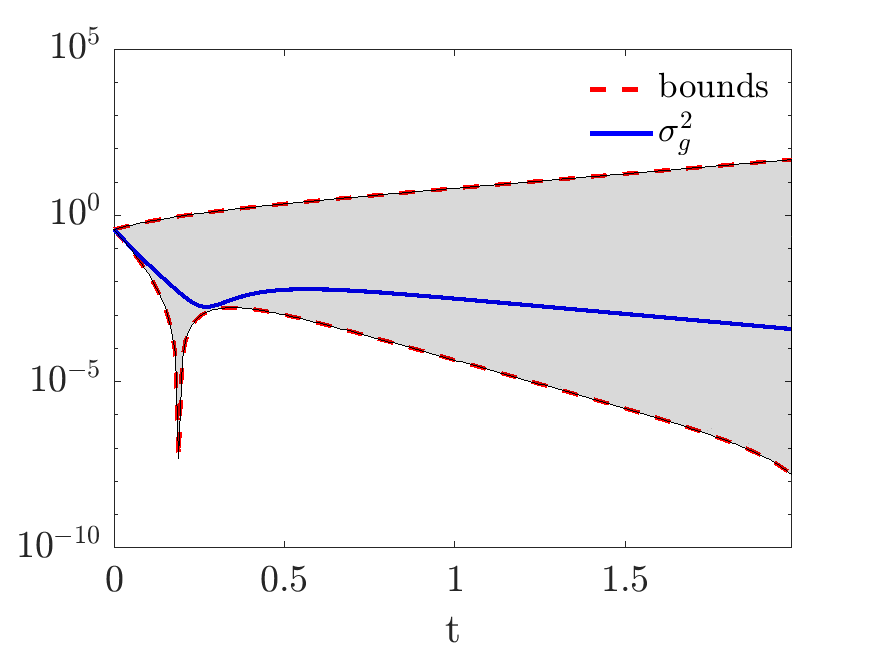}
			\end{tabular}
			\caption{\small{Variance decay and bounds for the open-loop approach \eqref{FFA}. On the left we observe the decay for an attractive dynamics, on the right the attractive-repulsive case.}}\label{fig:FFAbound}
		\end{center}
	\end{figure}
	
	\begin{remark}[Bounds for inexact open-loop Riccati control] 
		For the inexact open-loop Riccati control we observe that bounds on moments can be computed in similar fashion. From \eqref{eq:mean-field_nonlinear_0}, the first moment is given by
		\begin{equation}\label{diff_m1}
			\frac{d}{dt}m_1[g](t) = -\frac{k_d(t)+k_o(t)}{\nu}m_1[g^0],  \qquad m_1[g](0) = m_1[g^0].
		\end{equation}
		A substantial difference with respect to the exponential decay of the average \eqref{eq:mf_linear_average} is observed since the exact solution to \eqref{diff_m1} is
		\begin{equation}\label{eq:ave_0}
			m_1[g](t) = m_1[g^0]\left(1-\frac{1}{\nu}\int_0^t(k_d(s)+k_o(s))\ ds\right).
		\end{equation}
		Bounds of the variance of $g(v,t)$ are retrieved as a particular case of Proposition \ref{prop:Bounds}. The time evolution of $\sigma^2[g]$ reads
		\begin{equation} \label{sigma_0}
			\frac{d}{dt}\sigma^2[g] = -\int_{\R^{2d}}\!\!\!\!\!\vert v-v_*\vert^2 P(v,v_*) g g_* dv dv_* - 2 \frac{\kd}{\nu} \varrho[g^0,g] \sqrt{\sigma^2[g^0] \sigma^2[g]},
		\end{equation}
		and the bounds for $\sigma^2[g](t)$ correspond to the estimate in \eqref{ul_bounds} with $\beta(t)\equiv 1$,
		\begin{equation}\label{ul_bounds_0}
				\sigma^2[g^0]e^{-2bt} \left(1 - \frac{1}{\nu}\int_{0}^{t} e^{bs}  \kd(s)\!\ ds\right)^2\!\!\!
				\leq
				\sigma^2[g](t)
				\leq
				\sigma^2[g^0]e^{2at} \left( 1 + \frac{1}{\nu}\int_{0}^{t}  e^{-as}\kd(s)\!\ ds \right)^2
		\end{equation}
		
		The loss of the exponential decay of the average \eqref{eq:mf_linear_average} constitutes the main drawback of this approach, although we can still steer the density towards a reference solution.
	\end{remark}
	
	\subsection{Closed-loop Riccati control}
	We perform the derivation of the mean-field limit and moment bounds for the system \eqref{FBA}. Given the mean-field density $g(v,t)$, the mean-field limit of \eqref{FBA} is 
	\begin{equation}\label{eq:MF_FBA}
		\partial_t g = - \nabla_v \cdot\left(g\left(\mathcal{P}[g] - \frac{\kd}{\nu} v -\frac{\ko}{\nu}  m_1[g]\right) \right), \qquad\qquad\,\, g(v,0)=g^0(v)\,,
	\end{equation}
where $k_o(t),k_d(t)$ are obtained by a Riccati system \eqref{ko_kd_limit}. 
	\subsubsection{Bounds of decay moments}
	The first moment and variance equations are given in the following Lemma.
	\begin{lem}
	Under the assumption \eqref{eq:assumption1} the first moment of \eqref{eq:MF_FBA} evolves according \eqref{eq:mf_linear_average}.
	The evolution of the variance $\sigma^2[g]$ satisfies the equation
	\begin{equation}\label{sigma_CL}
	\frac{d}{dt}{\sigma}^2[g] = -\iint \vert v-v_*\vert^2 P(v,v_*) g(v) g(v_*) dv dv_* - \frac{2 \kd}{\nu} \sigma^2[g].
	\end{equation}
\end{lem}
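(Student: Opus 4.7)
The plan is to mirror the structure of the proof of Lemma 3.1, testing the closed-loop mean-field PDE \eqref{eq:MF_FBA} against the observables $\phi(v)=v$ and $\phi(v)=|v|^2$. Because the closed-loop control depends directly on $v$ and $m_1[g]$, no auxiliary density $f$ or conditional distribution $h$ enters the computation; the proof should therefore be noticeably shorter than that of Lemma 3.1, and should produce a clean linear dissipation without correlation terms.

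For the first moment, multiplying \eqref{eq:MF_FBA} by $v$ and integrating by parts yields
\begin{equation*}
\frac{d}{dt} m_1[g] = \iint_{\R^{2d}} P(v,v_*)(v_*-v)\, g(v)\, g(v_*)\,dv\,dv_* - \frac{\kd+\ko}{\nu}\, m_1[g].
\end{equation*}
Under assumption \eqref{eq:assumption1} the interaction integral vanishes by the usual symmetrization (swap $v\leftrightarrow v_*$, add, divide by two), giving exactly \eqref{eq:mf_linear_average}. For the second moment, testing with $|v|^2$ and integrating by parts produces
\begin{equation*}
\frac{d}{dt} m_2[g] = 2\!\int_{\R^d}\! v\cdot \mathcal{P}[g](v)\, g(v)\,dv - \frac{2\kd}{\nu}\, m_2[g] - \frac{2\ko}{\nu}\, |m_1[g]|^2.
\end{equation*}
The interaction term symmetrizes to $-\iint |v-v_*|^2 P(v,v_*)\, g\, g_*\,dv\,dv_*$ exactly as in the open-loop calculation. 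Combining this with the identity $\frac{d}{dt}|m_1[g]|^2 = -\frac{2(\kd+\ko)}{\nu}|m_1[g]|^2$, which follows immediately from the first-moment equation, and using $\sigma^2[g]=m_2[g]-|m_1[g]|^2$, the $\ko$ contributions cancel and one is left with \eqref{sigma_CL}.

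I do not anticipate a genuine obstacle. The key observation is that the absence of the correlation term $\varrho[f,g]\sqrt{\sigma^2[f]\sigma^2[g]}$ present in \eqref{sigma} is a structural consequence of the closed-loop design: no mixed moment of the joint distribution $\lambda$ needs to be invoked because $g$ is not coupled to any separate linearized dynamics. The only technical care needed is the vanishing of boundary terms in the integration by parts, which is justified by the standing assumption that particles remain in a fixed compact domain, so that $g(\cdot,t)$ has compact support in $v$ for every $t\in[0,T]$.
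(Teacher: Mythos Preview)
Your proposal is correct and follows essentially the same route as the paper, which simply says the computations mirror those of Lemma~\ref{lem:sigmaFFA}. The only minor difference is that the paper additionally notes a shortcut: once \eqref{sigma} has been established, one can obtain \eqref{sigma_CL} directly by setting $f\equiv g$ (so that $\varrho[f,g]=1$ and $\sigma^2[f]=\sigma^2[g]$), bypassing the need to redo the moment computation from scratch.
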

    We omit the computations of the proof, since they follow the same line of Lemma \ref{lem:sigmaFFA}. In particular, it is enough to observe that the variance equation of the open-loop Riccati \eqref{sigma} collapses to \eqref{sigma_CL} taking $f(t,v)=g(t,v)$. 
The following estimates on the decay of the variance hold.

	
	\begin{prop} \label{lemma_sigmag*} 
		Under assumption \eqref{eq:assumption1} on the interaction kernel $P(\cdot)$, there exist lower and upper bounds for the variance of $g$ given by
		\begin{equation}
			\sigma^2[g^0] e^{-2bt}C_\nu(0,t) \leq \sigma^2[g](t) \leq \sigma^2[g^0]e^{2at}C_\nu(0,t).
		\end{equation}
		where 
		$$
		C_\nu(0,t) = \exp\left\{-\frac{2}{\nu}\int_0^t \kd(s)ds\right\}.
		$$
	\end{prop}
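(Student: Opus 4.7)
The plan is to start from the variance evolution equation \eqref{sigma_CL} and derive a one-sided linear differential inequality for $\sigma^2[g](t)$ by using assumption \eqref{eq:assumption1} to sandwich the interaction kernel. Unlike the open-loop case of Proposition \ref{prop:Bounds}, there is no coupling to the linear marginal $f$, hence no cross term of the form $\varrho[f,g]\sqrt{\sigma^2[f]\sigma^2[g]}$ appears, and the argument simplifies considerably.

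First, I would rewrite the double integral in \eqref{sigma_CL} using the same identity as in \eqref{equality2} adapted to $g$ itself, namely
\begin{equation*}
\iint_{\R^{2d}} |v-v_*|^2 g(v) g(v_*)\,dv\,dv_* = 2\sigma^2[g].
\end{equation*}
Combined with $P(v,v_*) \in [-a,b]$, this yields the two-sided bound
\begin{equation*}
-2b\,\sigma^2[g] \leq -\iint_{\R^{2d}} |v-v_*|^2 P(v,v_*) g(v) g(v_*)\,dv\,dv_* \leq 2a\,\sigma^2[g].
\end{equation*}
Plugging these inequalities into \eqref{sigma_CL} leads to the pair of linear differential inequalities
\begin{equation*}
-\Bigl(2b + \tfrac{2\kd(t)}{\nu}\Bigr)\sigma^2[g] \;\leq\; \tfrac{d}{dt}\sigma^2[g] \;\leq\; \Bigl(2a - \tfrac{2\kd(t)}{\nu}\Bigr)\sigma^2[g].
\end{equation*}

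The final step is to integrate these scalar linear inequalities through the integrating-factor method (or equivalently by Gr\"onwall / Petrovitsch's theorem, as used in Proposition \ref{prop:Bounds}). Because the inequalities are of the form $\dot y \leq \phi(t) y$ and $\dot y \geq \psi(t) y$ with $\phi, \psi$ bounded and $y > 0$, they integrate cleanly to
\begin{equation*}
\sigma^2[g^0]\exp\!\Bigl\{\textstyle\int_0^t\!\bigl(-2b - \tfrac{2\kd(s)}{\nu}\bigr)\,ds\Bigr\} \leq \sigma^2[g](t) \leq \sigma^2[g^0]\exp\!\Bigl\{\textstyle\int_0^t\!\bigl(2a - \tfrac{2\kd(s)}{\nu}\bigr)\,ds\Bigr\},
\end{equation*}
which after factoring out $C_\nu(0,t) = \exp\{-\tfrac{2}{\nu}\int_0^t \kd(s)\,ds\}$ matches exactly the statement of the proposition.

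There is no real obstacle here, provided one accepts the preliminary identity $\iint |v-v_*|^2 g g_*\,dv\,dv_* = 2\sigma^2[g]$; the only subtle point is ensuring that $\sigma^2[g](t)$ remains nonnegative (so one can actually multiply/divide the inequality by it), but this is automatic since it is a variance. It is worth noting in a remark that these bounds are visibly tighter than those in Proposition \ref{prop:Bounds}: one recovers a clean exponential decay factor $C_\nu(0,t)$ multiplied by the sole modulation $e^{\pm 2\cdot t}$, reflecting the fact that the closed-loop control directly acts on the true nonlinear state rather than on the linear surrogate.
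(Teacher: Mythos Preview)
Your proof is correct and follows exactly the same approach as the paper: use the identity \eqref{equality2} together with the pointwise bound $P\in[-a,b]$ to obtain the linear differential inequality $-2(b+\kd/\nu)\sigma^2[g]\le \frac{d}{dt}\sigma^2[g]\le 2(a-\kd/\nu)\sigma^2[g]$, then integrate. The paper's proof is just the terse version of what you wrote, stopping at the differential inequality and leaving the Gr\"onwall step implicit.
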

	\begin{proof}Since the interaction kernel is bounded and using the identity \eqref{equality2} for $\sigma^2[g]$,
		it follows that
		\begin{equation}
			-2 \Big( b + \frac{\kd}{\nu} \Big)\sigma^2[g] \leq \frac{d}{dt}{\sigma}^2[g] \leq 2 \Big( a - \frac{\kd}{\nu} \Big)\sigma^2[g].
		\end{equation}
	\end{proof}

	In Figure \ref{fig:FBAbound} the decay of the variance $\sigma^2[g]$ and the bounds associated to kernel \eqref{eq:kernel_P} are shown. We choose the same parameters as reported in Figure \ref{fig:FFAbound}.
	We  observe that bounds of the closed-loop control \eqref{FBA} are closer compared to equation \eqref{FFA}. Moreover, a stronger decay is observed. Hence, we expect a better performance of the closed-loop control over the open-loop approaches. However, the open-loop approach \eqref{FFA} is useful when dealing with incomplete information or limited access to the non-linear dynamics.

	We devote the next section to the development of a synthesis method based on predictive horizons estimated a-priori through the bounds of the open-loop strategy \eqref{FFA}. 
	
	\begin{figure}[!ht]
		\begin{center}
			\begin{tabular}{c c}
				\textbf{$0\leq P(v,w)\leq1$} & \textbf{$-1\leq P(v,w)\leq 9$} \\
				\includegraphics[width=0.46\linewidth]{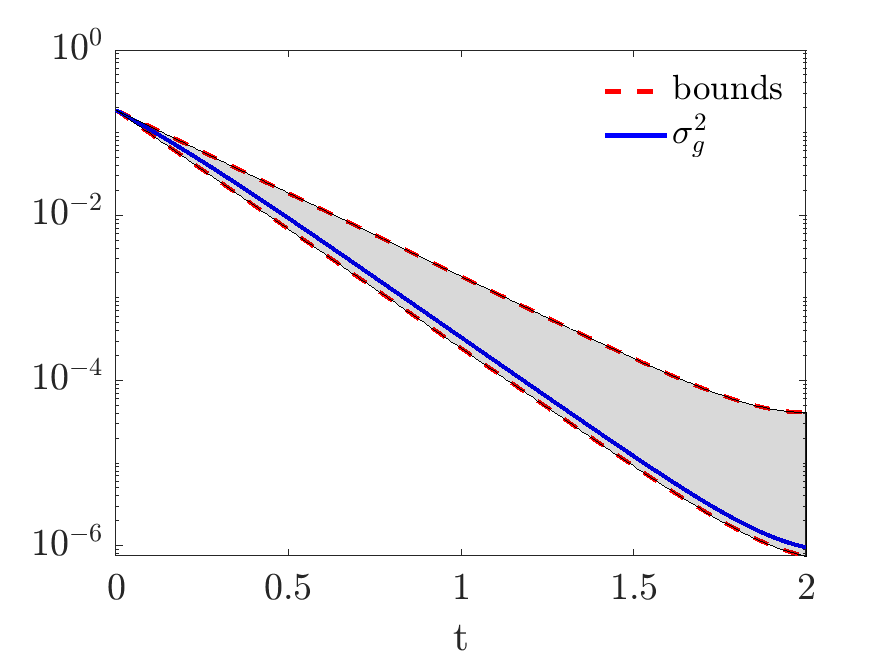} &
				\includegraphics[width=0.46\linewidth]{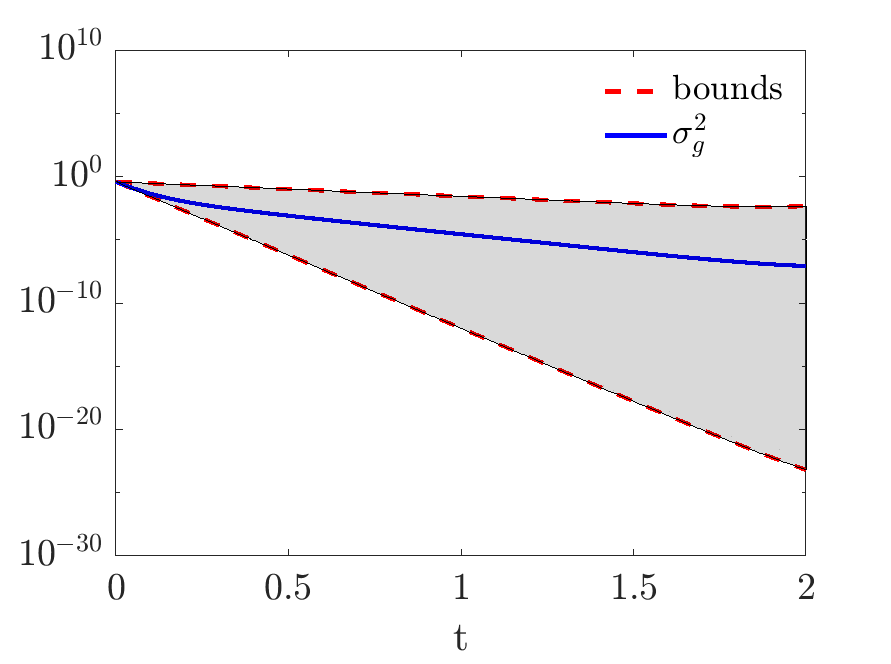}
			\end{tabular}
			\caption{\small{Variance decay and bounds for the open-loop approach \eqref{FBA}. On the left we observe the decay for an attractive dynamics, on the right the attractive-repulsive case.}}\label{fig:FBAbound}
		\end{center}
	\end{figure}

	\section{Moment-driven predictive control (MdPC)} \label{sec:MdPC}
	In order to utilize the stabilization properties of the control loops proposed in the previous sections, we discuss their implementation in a receding horizon framework. Here, we prescribe a control horizon where the control signal is applied, after which there is an update procedure including a re-calculation of the control law based on the current state of the system. There exists a vast literature addressing the design of nonlinear model predictive control (MPC) algorithms, we refer the reader to \cite{herty2017performance,grune2017nonlinear} and references therein. 
	
	In the general nonlinear MPC control algorithm, an open-loop optimal control signal is synthesized over a prediction horizon $[0,T_p]$, by solving a problem of the form \eqref{eq:func}. Having prescribed the system dynamics and the running cost, this optimization problem depends on the initial state $v(0)$ and the horizon $T_p$ only. The optimal signal $u^*$, which is obtained for the whole horizon $[0,T_p]$, is implemented over a shorter control horizon $[0,T_c]$. At $t=T_c$ the initial state of the system is re-calibrated to $v(0)=v(T_c)$ and the optimization is repeated. Relevant issues in the MPC literature are the selection of suitable horizons $T_p$ and $T_c$ which can ensure asymptotic stability of the closed-loop, as well as the design of effective optimization methods to make this implementation suitable for real-time control.
	
	Here instead, we propose a novel class of MPC-type algorithms where instead of fixing a prediction horizon, the re-calibration of the control laws \eqref{FFA}-\eqref{FFA_0} is triggered adaptively in time based on a direct estimate of the moments decay. This is similar in spirit to the literature on event-based MPC methods, see  for example \cite{6315560} where an event-based framework for the control of a team of cooperating distributed agents and \cite{5400921} for networked  systems is proposed.

	\paragraph{Variance driven Predictive Control MdPC($\sigma^2$)} Starting from the open-loop control \eqref{eq:mean-field_OL} we consider densities at initial time given by $g(v,0)=g^0(v)$, $f^0(w)\equiv g^0(w)$  and the joint distribution $\lambda^0(v,w)\equiv \lambda(v,w,t_0)$. To shorten the notation we introduce the general semi-discretization of the mean-field dynamics \eqref{eq:jointmean-field} as follows
	\begin{equation}\label{discreteMF_FFA}
			\lambda^{n+1}(v,w)= \Phi_{\Delta t}[\lambda^n;u^n[f^n]](v,w), \qquad \lambda^0(v,w)=\lambda(v,w,0),\quad n\geq 0,
	\end{equation}
	coupled with the solution of the Riccati system \eqref{ko_kd_limit}. Here, $\Phi_{\Delta t}$ defines the time discretization, and $u^n[f^n]$ encodes the control dependency on the density, given by
	\begin{equation}\label{eq:OCctrl_disc}
		u^n[f^n](w,t_n) = \frac{1}{\nu} \left(\kd(t_n) w +\ko(t_n) m_1[f^n](t_n)\right).
	\end{equation}
	Our goal is to predict the error in the variance decay $\sigma^2[g](t)$ directly from \eqref{ul_bounds} by computing the difference between the upper and lower bounds 
	\begin{equation}\label{eq:chk_bound}
		\Delta_{\sigma}(t_0,t) = \sigma^2[g(v,t_0)] \left[( e^{2a(t-t_0)} \left( 1 + B^-_a(t_0,t) \right)^2 - e^{-2b(t-t_0)} \left(1 - B^+_b(t_0,t)\right)^2 \right],
	\end{equation}
	where $B^\pm_c(t_0,t)$ are the quantities defined in \eqref{B}. Then we can use  $\Delta_{\sigma}(t_0,t)$ to control the decay of the variance $\sigma^2[g](t)$ in order to keep the variance of $g(v,t)$  below a fixed threshold $\delta>0$.
	In this way, we can find time $t_1>t_0$ such that $\Delta_{\sigma}(t_0,t_1)>\delta$ and evolve the dynamics in the time interval $[t_0,t_1]$. The procedure is reinitialized updating the state of the linearized dynamics at time $t_1$ by setting $f(t_1,v) \equiv g(t_1,v)$.
	We formalize this procedure in Algorithm \ref{alg:MPCsigma}.
	\begin{algorithm}[!ht]
		\caption{[MdPC($\sigma^2$)]}\label{alg:MPCsigma}
		\begin{algorithmic}
			\STATE 0. Set $k \leftarrow 0$, $t_k = 0$, $g^k(v) = g(v,0)$, $f^k(v) = g(v,0)$ and tolerance $\delta$\;
			\STATE 1. Solve the Riccati equation to obtain $\kd$, $\ko$ on the time interval $[0,T]$\;
			\STATE 2. Find the time $t_{k+1}$ such that $t_{k+1} := \min \lbrace t \vert t_k< t \leq T,\Delta_{\sigma}(t_k,t) > \delta \rbrace$
			\WHILE{$t_{k+1}\leq T$}
			\STATE i. Evolve the dynamics \eqref{discreteMF_FFA} up to $t_{k+1}$\;
			\STATE ii. Set $g_{k+1}(v) = g(v,t_{k+1})$,  $f_{k+1}(v) = g(v,t_{k+1})$\;
			\STATE iii. $k \leftarrow k+1$
			\STATE iv. Compute $t_{k+1}$ from step 2.\;
			\ENDWHILE
		\end{algorithmic}
	\end{algorithm}
	\paragraph{Mean and variance driven predictive control, $\textrm{MdPC}(m_1,\sigma^2)$} For the inexact open-loop control approach \eqref{eq:mean-field_nonlinear_0} it is necessary to modify the previous algorithm  controlling also the decay of the first moment of $g=g(v,t)$. For this, we introduce the semidiscretized mean-field model of \eqref{eq:mean-field_nonlinear_0} 
	\begin{equation}\label{discreteMF_FFA_0} 
		\begin{aligned}
			\lambda^{n+1}(v,w)    &= \Phi_{\Delta t}[\lambda^n;u^n[g^0]](v,w), \qquad \lambda^0(v,w)=\lambda(v,w,0),\quad n\geq 0\,,
		\end{aligned}
	\end{equation}
	where the control is given by 
	\begin{equation}\label{eq:OCctrl_0_disc}
		u^n[g^0](w,t_n) = \frac{1}{\nu} \left(\kd(t_n) w +\ko(t_n) m_1[g^0]\right).
	\end{equation}
    According to the bounds  \eqref{ul_bounds_0}, the decay of the variance is controlled by $\Delta_\sigma(t_0,t)$ as in \eqref{eq:chk_bound} with
	\begin{equation}\label{eq:chk_bound_0}
		B^\pm_c(t_0,t) = \frac{1}{\nu}\int_{t_0}^{t-t_0}\kd(s)e^{\pm c (s-t_0)} ds.
	\end{equation}
	However, in this case the convergence towards the desired state is not guaranteed since the decay of the first moment \eqref{eq:ave_0} does not match the moment of the linearized model \eqref{eq:mean-field_Linear}. To guarantee consensus convergence we require \eqref{eq:ave_0} to be contractive. For this, we introduce the control quantity
	\begin{equation}\label{eq:chk_ave}
		\Delta_{m}(t_0,t) = \left|1-\frac{1}{\nu}\int_{t_0}^{t-t_0}(k_d(s)+k_o(s))\ ds\right|,\qquad 0<\tau\leq 1\,,
	\end{equation}
	which we use to determine updates in Algorithm \ref{alg:MPCm_sigma}.
	\begin{algorithm}[!ht]
		\caption{[MdPC$( m_1,\sigma^2)$]}\label{alg:MPCm_sigma}
		\begin{algorithmic}
			\STATE 0. Set $k \leftarrow 0$, $t_k = 0$, $g^k(v) = g(v,0)$ and tolerances $\delta,\tau>0$\;
			\STATE 1. Solve the Riccati equation to obtain $\kd$, $\ko$ on the time interval $[0,T]$.\;
			\STATE 2. Find the times $t_{\delta},t_{\tau}$ such
			\begin{equation}\label{P2}
				\begin{aligned}
					t_{\delta } &:= \min \lbrace t \vert t_k< t \leq T,\Delta_{\sigma}(t_k,t) > \delta \rbrace,\cr
					t_{\tau}      &:= \min \lbrace t \vert t_k< t \leq T,\Delta_{m}(t_k,t) > \tau \rbrace,\cr
					t_{k+1}&:=\min\left\{t_{\delta},t_{\tau}\right\}
				\end{aligned}
			\end{equation}
			\WHILE{$t_{k+1}\leq T$}
			\STATE i. Evolve the dynamics \eqref{discreteMF_FFA_0} up to $t_{k+1}$\;
			\STATE ii. Set $g^{k+1}(v) = g(v,t_{k+1})$ and  $g^{0}(v) = g(v,t_{k+1})$ \;
			
			\STATE iii. $k \leftarrow k+1$
			\STATE iv. Compute $t_{k+1}$ from \eqref{P2};
			\ENDWHILE
		\end{algorithmic}
	\end{algorithm}

	\begin{remark}
We observe that when an update is performed at each time step, that is for values of $\delta$ small enough, the MdPC approaches is equivalent to a discretization of the closed-loop control \eqref{eq:MF_FBA}. Indeed, since for every $n\geq0,$ we have $f^n\equiv g^n$ , the mean-field model \eqref{discreteMF_FFA} reduces to
		\begin{equation}\label{discreteMF_FBA} 
			\begin{aligned}
				g^{n+1}(v)    &= \Phi_{\Delta t}[g^n;u[g^n]](v),\qquad  n\geq 0,\qquad g^0(v)=g(v,t_0),
			\end{aligned}
		\end{equation}
		where the control is given by $u[g^n](v,t_n) = \frac{1}{\nu} \left(\kd(t_n) v +\ko(t_n) m_1[g^n](t_n)\right)$ .		
	\end{remark}

	\section{Numerical Experiments}\label{sec:Numerics}
	In this section we present different numerical tests on microscopic and mean-field dynamics. We analyze three different cases: a first-order opinion dynamics, a second-order alignment model, and first-order aggregation model. 
	For the numerical solution of the mean-field model \eqref{eq:mean-field_OL} we employ mean-field Monte-Carlo methods (MFMCs) developed in \cite{albi2013binary}. These methods fall in the class of  fast algorithms developed for interacting particle systems such as direct simulation Monte-Carlo methods (DSMCs) \cite{bobylev2000theory,dimarco2010direct,babovsky1986simulation}, or most recently Random Batch Methods (RBMs) \cite{jin2020random}. 
	
	We consider $N_s$ particles $v^0\equiv\left\{v_i^0\right\}_i$ sampled from the initial distribution $g^0(v)$, and 
	we duplicate the sample defining $w^0\equiv v^0$ for the linearized dynamics. We introduce the following approximation for the mean-field dynamics
	\begin{subequations}\label{eq:MFCM}
		\begin{align}\label{eq:MFMC_FFA}
			v_i^{n+1} &= (1-\Delta t \hat P_i^n) v_i^n + \Delta t \hat P_i^n \hat V^n_i - \Delta t u_i^n,  \\
			w_i^{n+1} &= (1-\Delta t \bar p) w_i^n + \Delta t \bar p \hat m_1^n - \Delta t u_i^n,
		\end{align}
	\end{subequations}
	for $n\geq 0$ and where the quantities $\hat P_i^n$ and $\hat V^n_i$ are computed from a sub-sample of $M$ particles randomly selected from the whole ensemble of $N_s$ particles as follows
	\[
	\hat P^n_i  = \frac{1}{M}\sum_{k=1}^M P(v^n_i,v^n_{i_k}),\qquad \hat V^n_i = \frac{1}{M} \sum_{k=1}^M\frac{P(v^n_i,v^n_{i_k})}{\hat P^n_i} v^n_{i_k}, \qquad i = 1,\ldots,N_s.
	\]
	For the open-loop mean-field dynamics \eqref{discreteMF_FFA} the control $u^n_i$ is defined as
	\begin{equation}\label{eq:OLctrl}
		u_i^n = -\frac{1}{\nu}\left(\kd^nw_i^n+\ko^n \hat m_1^n \right),\qquad  \hat m_1^n = \frac{1}{N_s}\sum_{j=1}^{N_s}w_i^n.
	\end{equation}
	The scheme \eqref{eq:MFCM} reduces to a set of equations for the  mean-field dynamics for the closed-loop \eqref{discreteMF_FBA} and inexact open-loop \eqref{discreteMF_FFA_0}, respectively.
	In the closed-loop setting \eqref{discreteMF_FBA} the control term is computed as
	\begin{equation}\label{eq:CLctrl}
		u_i^n = -\frac{1}{\nu}\left(\kd^nv_i^n+\ko^n \hat m_1^n \right),\qquad  \hat m_1^n = \frac{1}{N_s}\sum_{j=1}^{N_s}v_i^n,
	\end{equation}
	and in the inexact open-loop approach \eqref{discreteMF_FFA_0} we have
	\begin{equation}\label{eq:iOLctrl}
		u_i^n = -\frac{1}{\nu}\left(\kd^nv_i^0+\ko^n \hat m_1^0 \right),\qquad  \hat m_1^0 = \frac{1}{N_s}\sum_{j=1}^{N_s}v_i^0.
	\end{equation}

	We report in Table \ref{tab:parameters} the different choices of parameters used for the numerical discretization of the mean-field dynamics and for each control approach, respectively. We compare the performance of the control laws through the discretized cost
	\begin{equation}\label{eq:runcost_ex}
		J_{\Delta t,N_s}(u,g^0) := \frac{\Delta t}{N_s}\sum_{n=0}^{N_T}\sum_{j=1}^{N_s} (\vert v^n_j \vert^2 + \nu \vert u_j^n\vert^2), 
	\end{equation}
	with time step $\Delta t$ and $N_s$ Monte Carlo samples.

	\begin{table}
		\centering
		\begin{tabular}{cccccccc}		
			& $N_s$ & M & $\Delta t$ & $\nu$ &  $T$ & $\delta$ & $\tau$\\
 \cmidrule(lr){2-2}\cmidrule(lr){3-3}\cmidrule(lr){4-4}\cmidrule(lr){5-5}\cmidrule(lr){6-6}\cmidrule(lr){7-7}\cmidrule(lr){8-8}\\[-1em]
			Test 1: Opinion formation & 1e4 & 100 & 1e-2  & 1e-2 & $1$ & 1e-1 &1\\
			Test 2: Cucker-Smale dynamics & 1e5 & 100 & 5e-2  & 1e-1 & $3$ & 1 &--\\
			Test 3: Aggregation dynamics & 1e5 & 10 & 1e-2  & 1 & $7$ & 1e-1 &--\\
			\hline\\
		\end{tabular}
		\caption {Simulation and optimization parameters for each test case.}\label{tab:parameters}
		\vskip-5mm
	\end{table}

	\subsection{Test 1: Opinion formation} \label{test1section}
	We show an example in the context of opinion formation by Hegselmann and Krause \cite{hegselmann2002opinion}. We consider the positive interaction kernel defined as 
	$P(v,w)=C \cdot \chi (\vert w-v \vert < \eta) $, where $\eta=0.25$ represents the confidence level and with a constant $C= 10$, and $\chi$ is an indicator function.  The initial density of particle $g^0(v)$ is chosen such that consensus towards the target $\bar v  = 0$  would not be reached without control action, e.g. $g^0(v)=\frac23\chi_{[1/4,7/4]}(v)$ . 
	We use the forward scheme \eqref{eq:MFCM} with fixed time step $\Delta t = 0.01$, sampling size $N_s =10000$ of the initial distribution $g^0(v)$ and fixed $M=100$ for the approximation of the non-local interactions.
	To compare the mean-field dynamics with the microscopic we simulate $N=50$ agents uniformly sampled from $g^0$. 
	
	In the top row of Figure \ref{test1} the uncontrolled dynamics are shown, where clusters of opinions emerge due to  structure of the interaction kernel $P$.
	The second and third rows of Figure \ref{test1} depict the controlled dynamics for the microscopic and the mean-field  dynamics.
	The left column of Figure \ref{test1} illustrates the convergence to the target when the MdPC$(m_1,\sigma^2)$ is applied. Algorithm \ref{alg:MPCm_sigma} is used with $\delta$ and $\tau$ chosen according to Table \eqref{tab:parameters}.
	The vertical lines in the plots represent the times of the update. 
	We have a different situation with the MdPC$(\sigma^2)$ approach \ref{alg:MPCsigma}, depicted in the middle column of Figure \ref{test1}. In this case the control is applied by directly embedding the linear synthesis into the the non-linear dynamics. As a consequence, we also require the evolution of the linear state that we plot in a dashed green line for the microscopic case. The right column of Figure \ref{test1} reports the closed-loop control results.
	\begin{figure}[!ht]
		\begin{center}
			{Uncontrolled} \\
			\includegraphics[width=0.328\linewidth]{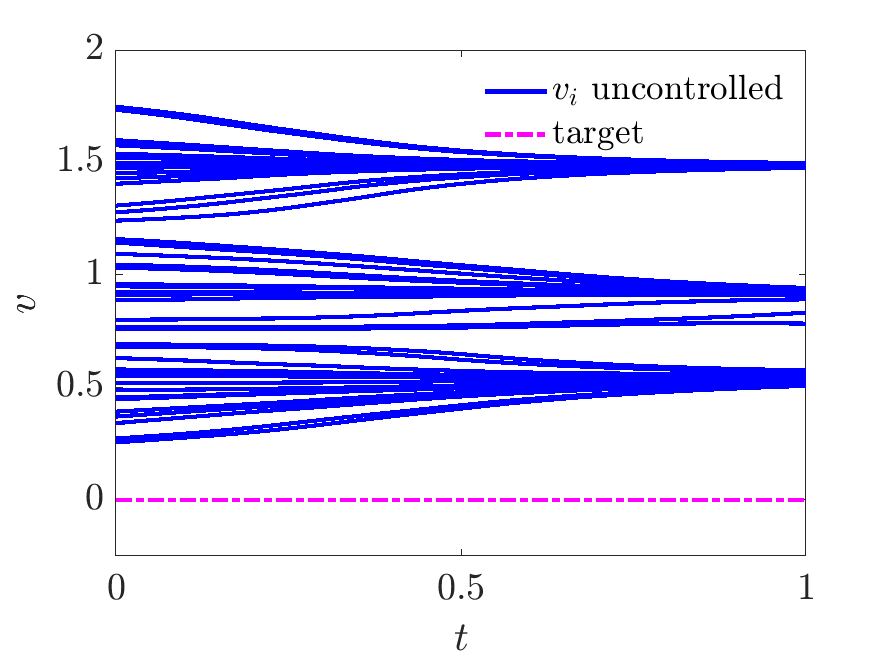}
			\includegraphics[width=0.328\linewidth]{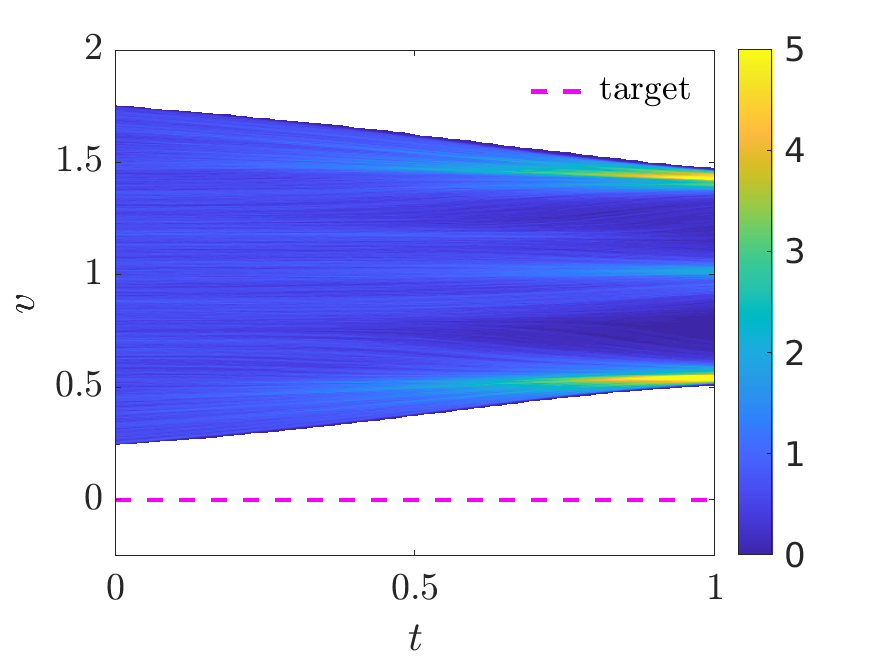}
			\\
			\
			\\
			{MdPC($m_1,\sigma^2$)}\hspace{2.5cm}{MdPC($\sigma^2$)}\hspace{2.5cm}{closed-loop} \\
			\includegraphics[width=0.328\linewidth]{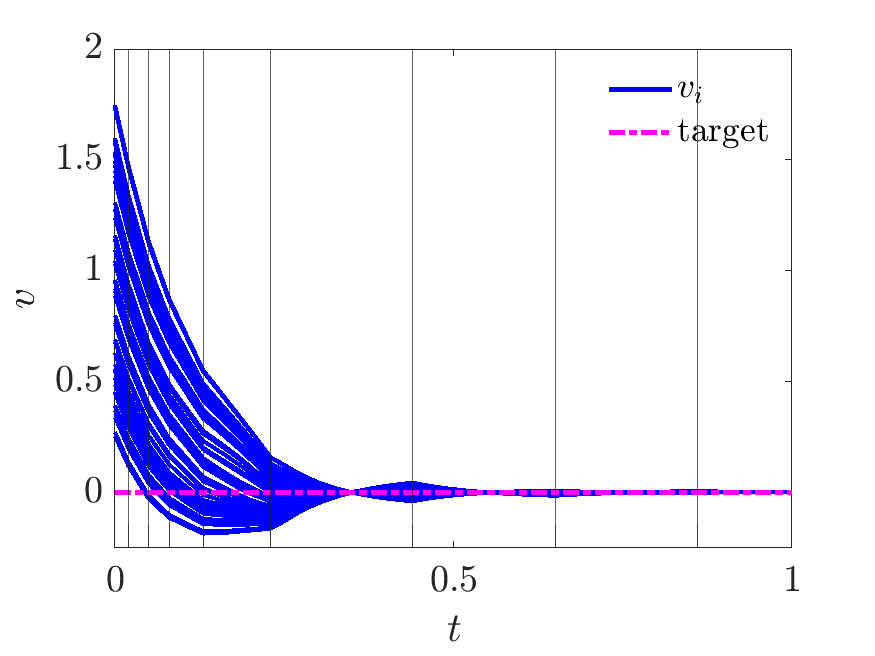}
			\includegraphics[width=0.328\linewidth]{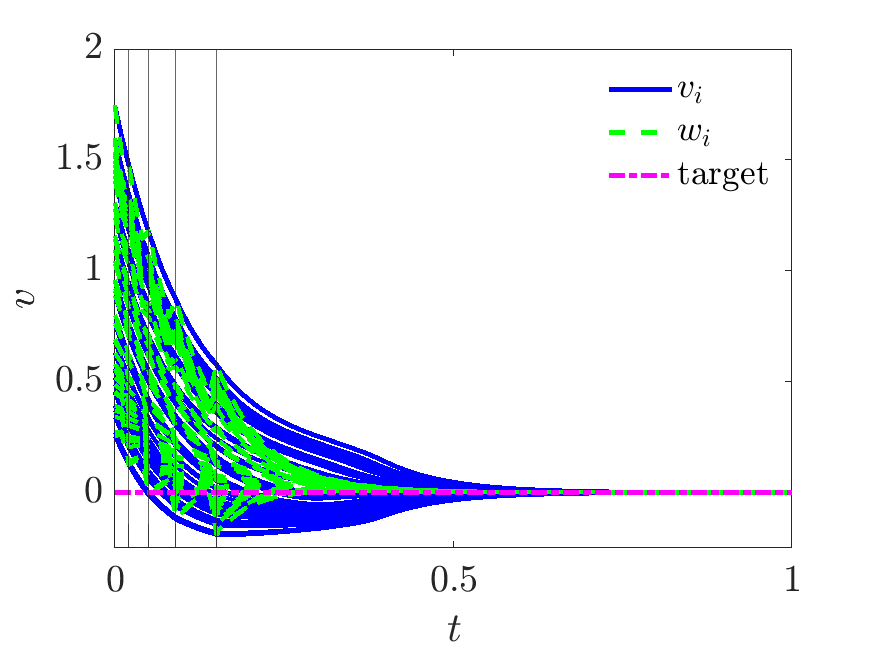}
			\includegraphics[width=0.328\linewidth]{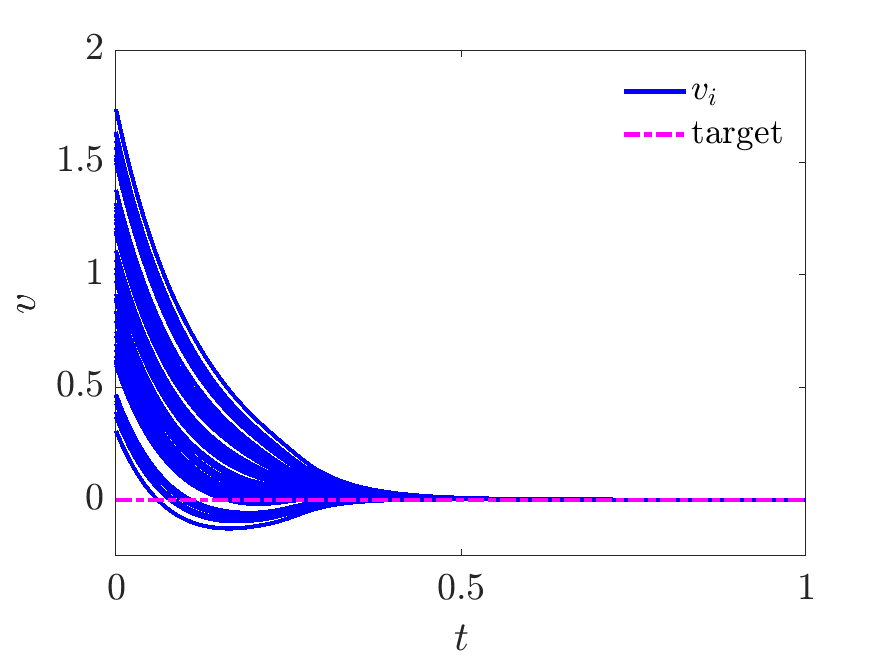}
			\\
			\
			\\
			\includegraphics[width=0.328\linewidth]{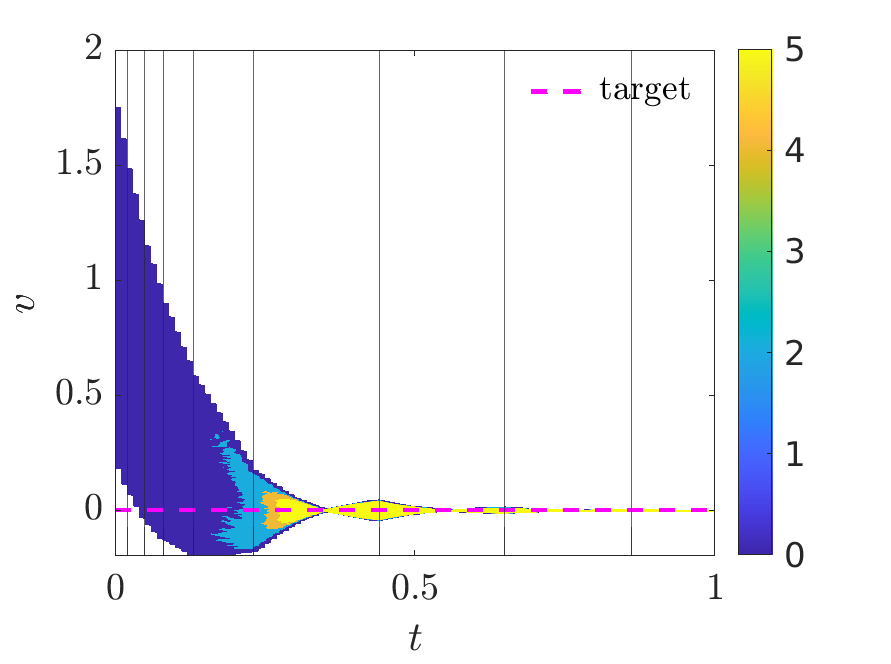}
			\includegraphics[width=0.328\linewidth]{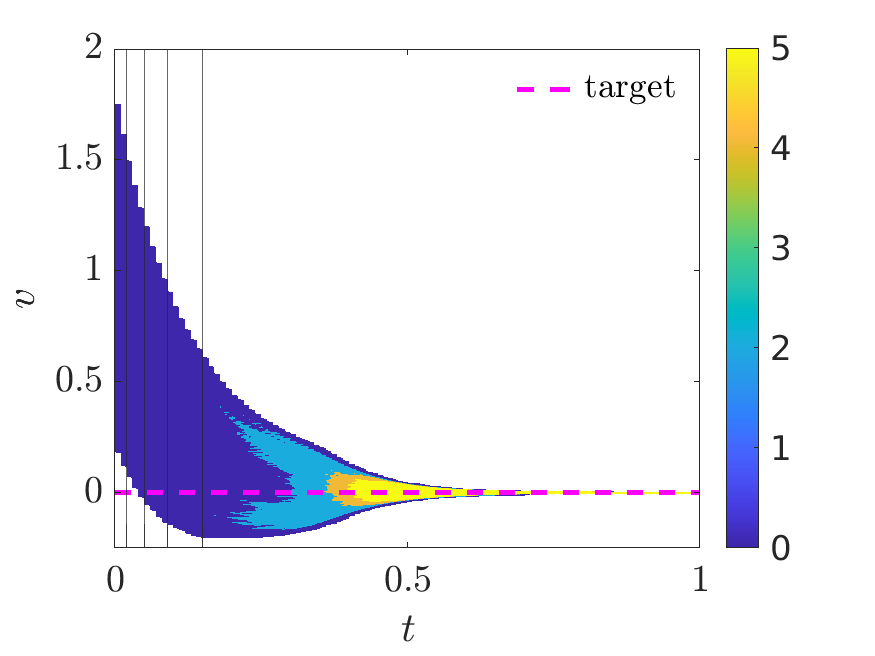}
			\includegraphics[width=0.327\linewidth]{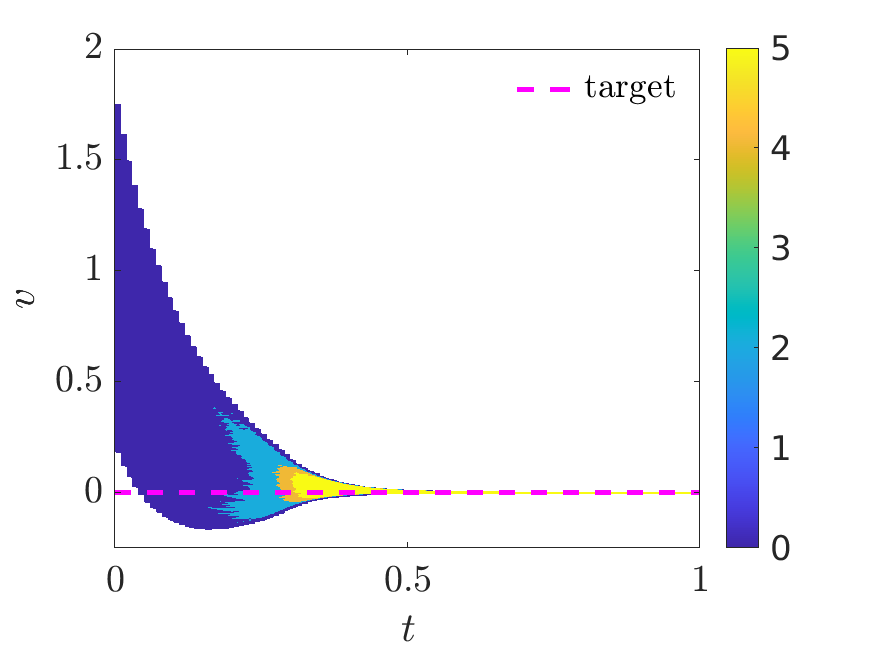}
			\caption{{\em Test 1.} {Top: uncontrolled discrete and mean-field evolution of the model. Middle: the controlled discrete case. Bottom: the controlled mean-field dynamics.}} \label{test1}
		\end{center}
	\end{figure}
	To better interpret these results, we perform a numerical analysis to study the decay of the variance $\sigma^2 [g]$ using different values of $\delta$ in Algorithm \ref{alg:MPCsigma} and Algorithm \ref{alg:MPCm_sigma}. Figure \ref{test1bound} compares the variance of the system as the values of the tolerance $\delta$ changes. It can be seen that as $\delta$ decreases, the MdPC($\sigma^2$) approaches the closed-loop control. This numerical evidence  is further confirmed by Table \ref{test1table1}. With  decreasing values of $\delta$ we have an increasing number of updates and the values of the functional $J_{\Delta t,N_s}$ computed in \eqref{eq:runcost_ex} is similar for the three control approaches. We observe that the closed-loop control corresponds to a limit case of the moment-driven MPC methods.
	\begin{figure}[!ht]
		\begin{center}
			\includegraphics[width=0.495\linewidth]{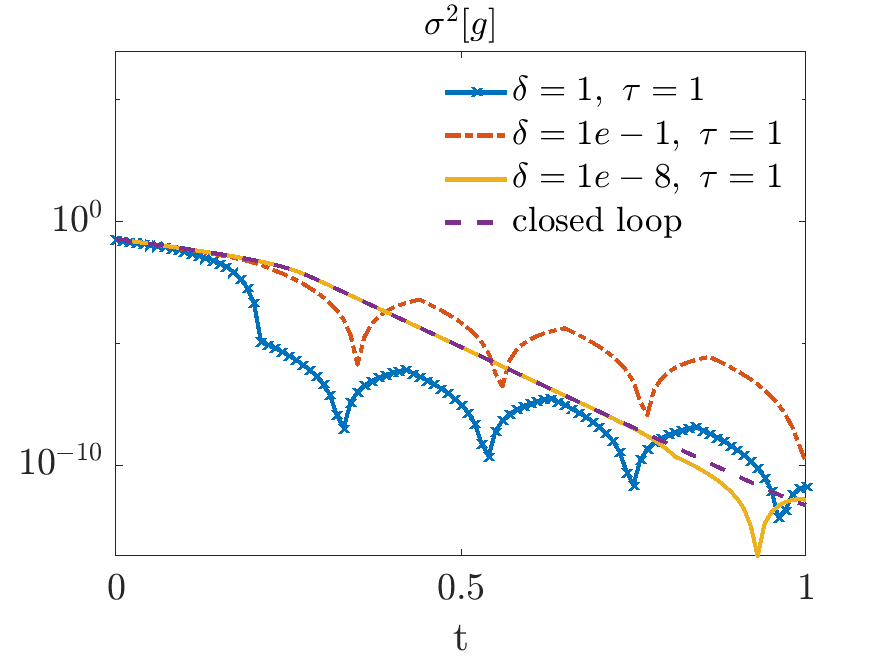}
			\includegraphics[width=0.495\linewidth]{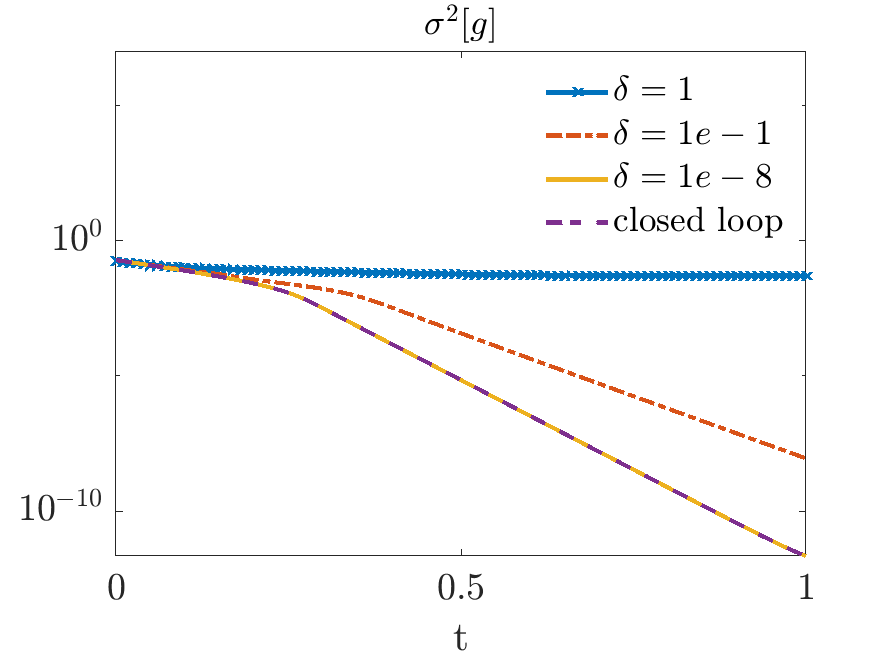}
			\caption{{\em Test 1.} {Semi-log plot with a comparison between variance decay with different values of the tolerance $\delta$. Left: variance decays for MdPC$(m_1,\sigma^2)$. Right: decays for MdPC($\sigma^2$)} compared with respect to the closed-loop control.}\label{test1bound}
		\end{center}
	\end{figure}
	\begin{table}[!ht]
			\begin{tabular}{ccccccc}
				\multicolumn{1}{c}{} & \multicolumn{3}{c}{MdPC($m_1,\sigma^2$)} 
				& \multicolumn{3}{c}{MdPC($\sigma^2$)}\\
				\cmidrule(lr){2-4} \cmidrule(lr){5-7}\\[-1em]
				$\delta$ & 1 & $0.1$ & 1e-8  &  1 & 0.1 & 1e-8 \\
				\texttt{ update} ($\%$) & 4 \% & 8\% & 71 \%   
				& 0 \%      & 4 \%     & 72 \%   
				\\
				$\sigma^2[g](T)$       & 1.28e-11 & 1.26e-10 & 3.80e-12  
				& 5.04e-2 & 8.94e-9 & 2.22e-12   
				\\
				$J_{\Delta t,N_s}$  &   1.8131 & 0.1306 & 0.1281
				& 0.1777 & 0.1309 & 0.1281  
				\\   
				\hline
			\end{tabular}\\
		\caption{{\em Test 1.} We compare the different MdPC approaches with respect to the closed-loop control. For MdPC($m_1,\sigma^2$) the tolerance for the mean is set $\tau=1$. The number of updates indicates the percentage of control updates over the total number of time steps $N_T = 100$ (for reference, the closed-loop control would require a 100\%). The final value of the variance is denoted by $\sigma^2(T)$, and $J_{\Delta t,N_s}$ corresponds to the value of the cost functional \eqref{eq:runcost_ex}. For the closed-loop control $\sigma^2(T)=3.80e-12$ and $J_{\Delta t,N_s}=0.1281 $. }\label{test1table1}
		\vskip-5mm
	\end{table}
	\subsection{Test 2: Cucker-Smale dynamics}
	We study alignment in a second-order, 1D model with Cucker-Smale type interactions \cite{cucker2007emergent}. We consider a state  characterized by  $(x_i,v_i)\in\R^{2} $. The interaction kernel is given by $P(x,y)= \frac{1}{(1+\vert y-x \vert^2)^{\gamma}},$ with $\gamma\geq 0,$  which is a decreasing function of the relative distance, bounded to $[0,1]$. Under the condition $\gamma\geq 1/2$, the convergence to consensus of the free dynamics depends on the initial state \cite{cucker2007emergent}. We set $\gamma=2$ and a suitable initial state, such that the flocking state is not achieved without control action. To perform our analysis we refer to the second-order dynamics \eqref{order2} and use Remark \ref{rmk_second order} to obtain the Riccati equations. Unlike the first-order dynamics, the constrained mean-field has a transport term
	\begin{equation}\label{eq:MF_CS}
		\partial_t g + v\cdot\nabla_x g = - \nabla_v \cdot\left( g\left(\mathcal{P}[g] + u(t) \right) \right), \qquad\qquad\,\, g(x,v,0)=g^0(x,v),
	\end{equation}
	and the nonlocal operator $\mathcal P $ is defined as
	\[\mathcal{P}[g](x,v,t) = \int_{\R^d\times\R^d} P(x,y)(w-v)g(y,w,t)\ dy\ dw.\]
	We discretize the mean-field model employing the forward scheme \eqref{eq:MFCM} with fixed time step $\Delta t = 0.05$, sampling size of $N_s =100000$ particles and fixed $M=100$. In order to treat the additional transport term in the dynamics, we use a splitting method to perform the free transport step. The control is computed by means of MdPC($\sigma^2$). We refer to Table \eqref{tab:parameters} for the choice of parameters.
	
	Figure \ref{test2} presents the initial data at the top, which is a bivariate distribution unimodal in space and bimodal in velocity defined as follows:
	\[
	g^0(x,v) = \frac{1}{4 \pi \sigma_x \sigma_v} \text{exp}\Biggl( -\frac{x^2}{2 \sigma_x^2}\Biggr) \Biggl[ \text{exp}\Biggl( -\frac{(v+v_-)^2}{2 \sigma_v^2}\Biggr) + \text{exp}\Biggl( -\frac{(v+v_+)^2}{2 \sigma_v^2}\Biggr) \Biggr],
	\]
	with $\sigma_x=0.2, \sigma_v=0.4$ and $v_{-} =-1, v_+ =4$.
	We compare the evolution of the density distribution in the phase space $(x,v)$, jointly with a set of $N=30$ microscopic points $(x_i(t),v_i(t))$ sampled from the initial distribution.
	The middle row depicts two time frames of the uncontrolled dynamics, where alignment is not reached. In the bottom row we report the constrained dynamics, where
	the alignment is reached at time $T = 3$ and the density $g(x,v,t)$  concentrates at the target state $\bar v = 0$, whereas its support is bounded in space.
	\begin{figure}[!ht]
		\begin{center}
			\includegraphics[width=0.45\linewidth]{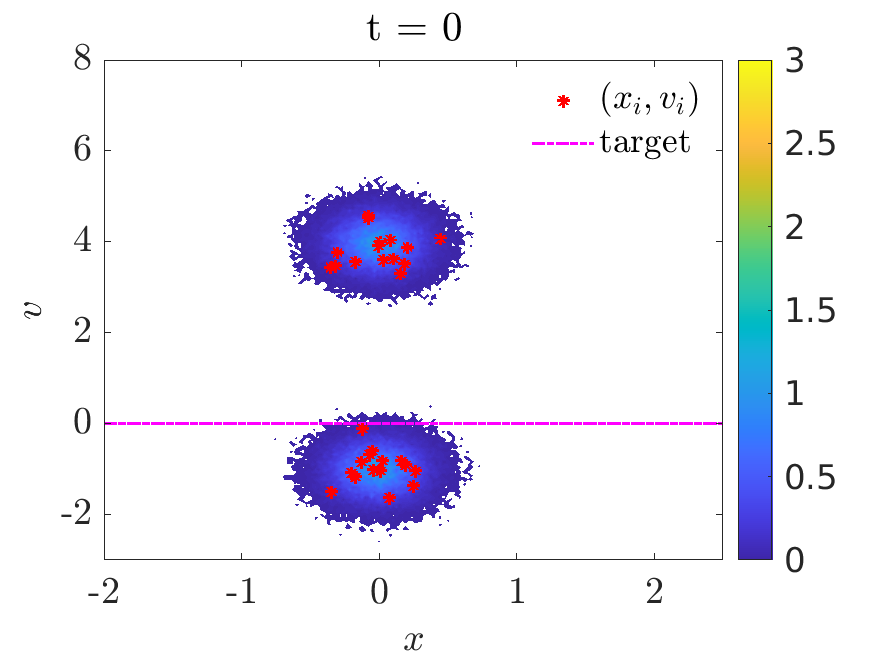}
			\\
			\textrm{Uncontrolled} \\
			\includegraphics[width=0.45\linewidth]{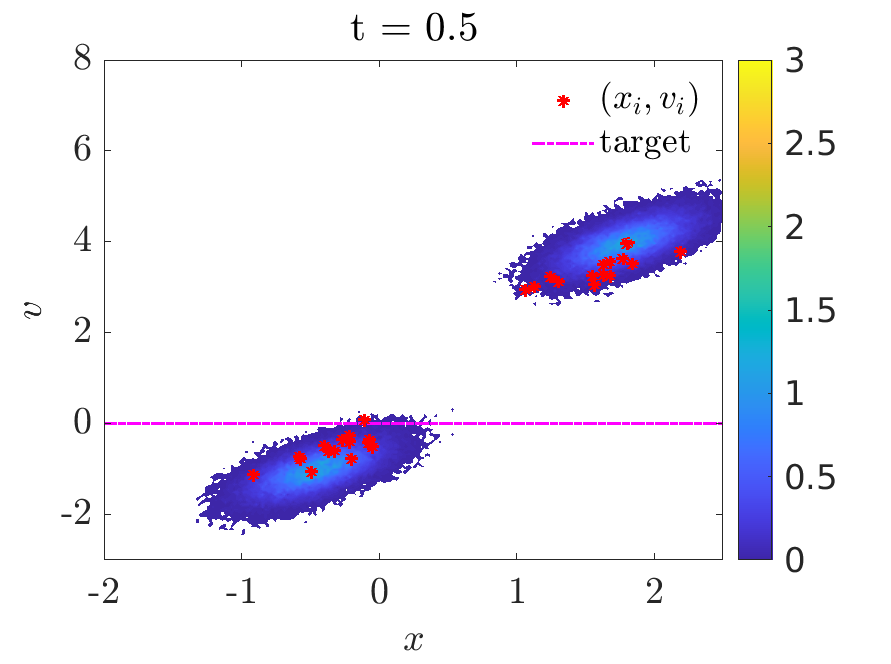}
			\includegraphics[width=0.45\linewidth]{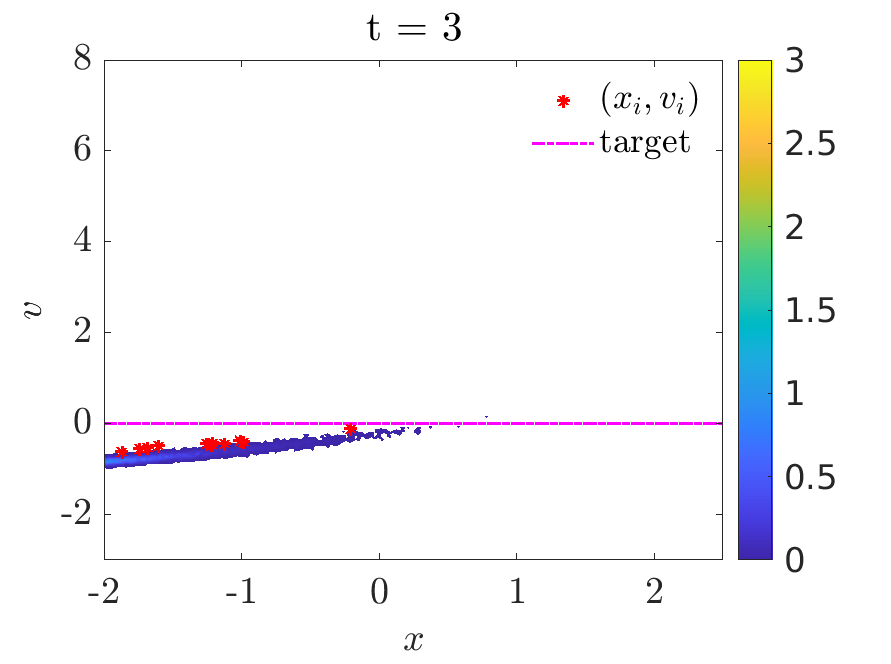}
			\\
			\textrm{$\textrm{MdPC}(\sigma^2)$} \\
			\includegraphics[width=0.45\linewidth]{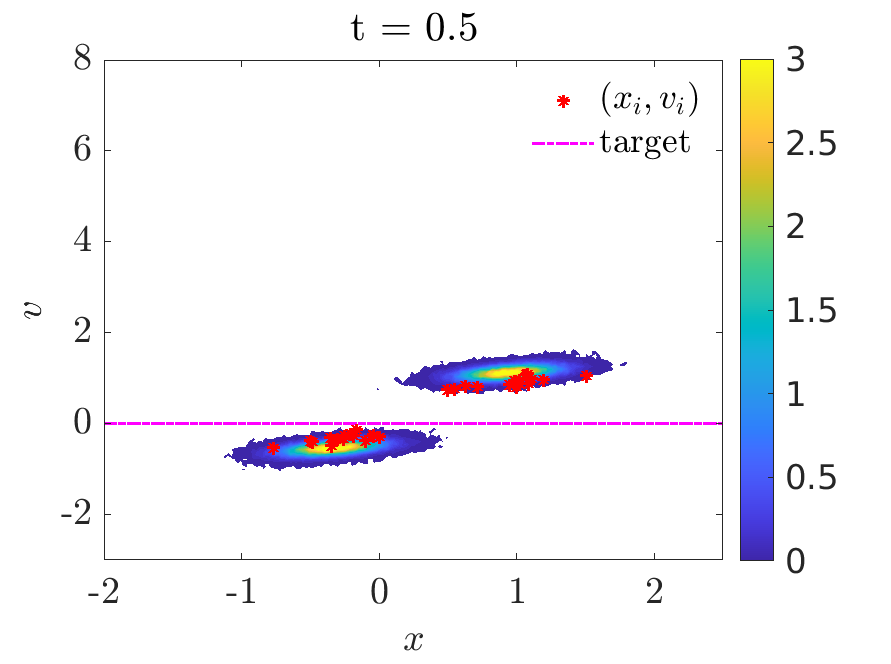}
			\includegraphics[width=0.45\linewidth]{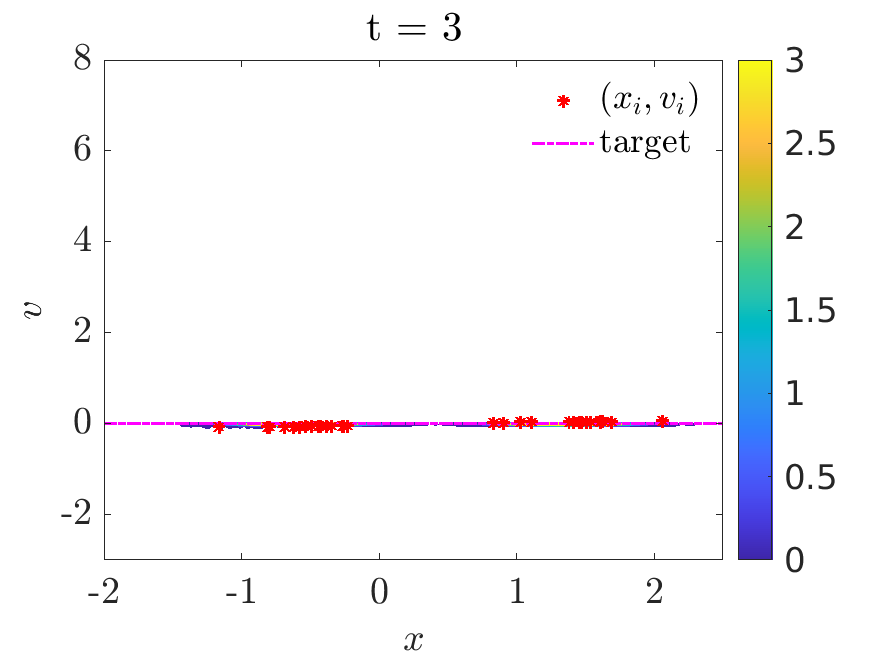}
		\end{center}
		\caption{{\em Test 2.}{ Comparison between the uncontrolled and controlled mean-field evolution of the Cucker-Smale dynamics with $\textrm{MdPC}(\sigma^2)$ and $\delta = 1$. Without control intervention the alignment state is not reached.}}\label{test2}
	\end{figure}

	\begin{figure}[!ht]
		\begin{center}
			\includegraphics[width=0.45\linewidth]{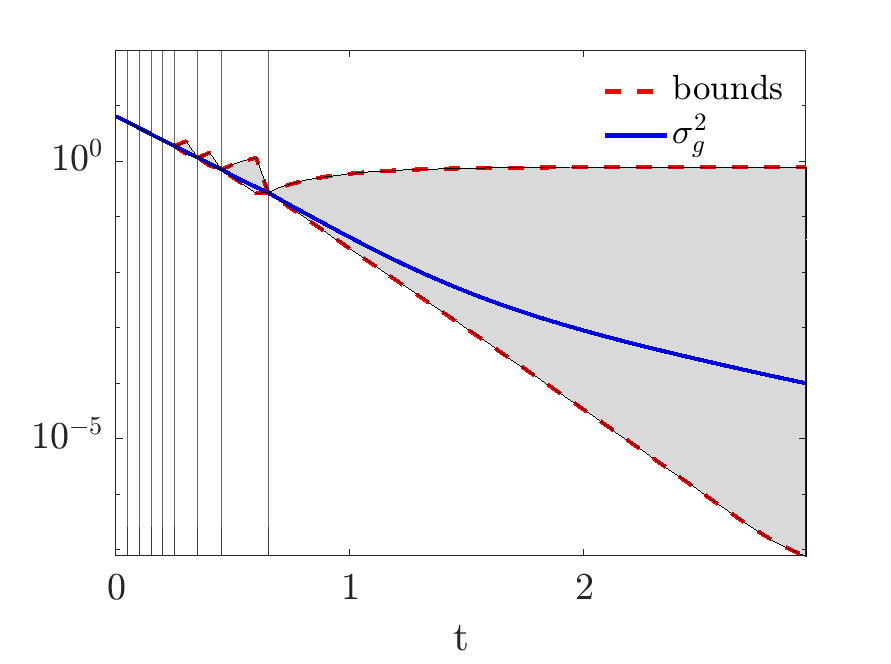}
			\includegraphics[width=0.45\linewidth]{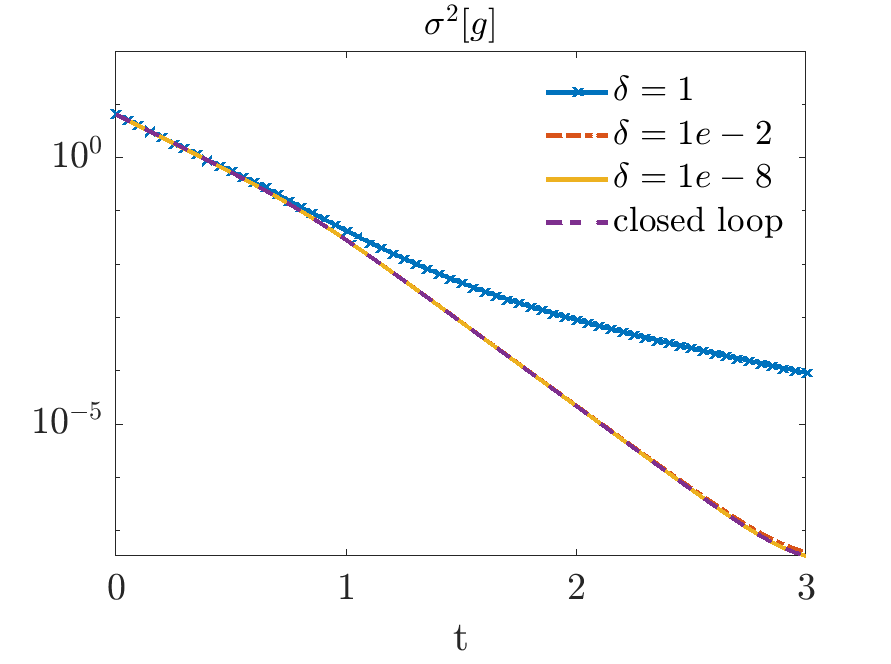}
			\caption{{\em Test 2.}{Left: variance and bounds for the second order attractive one-dimensional mean-field dynamics with tolerance $\delta = 1$. Right: variance decays with different values of $\delta$}.}\label{test2bound}
		\end{center}
	\end{figure}
	We perform a numerical study for the variance decay $\sigma^2[g]$ using different tolerances $\delta$ in Algorithm \ref{alg:MPCsigma} for $\textrm{MdPC}(\sigma^2)$. Figure \ref{test2bound} compares the decay of the variance of the system as the values of the tolerance $\delta$ changes. On the left we observe the decay for $\delta =1$ jointly with the 
	update and the evolution of the variance bounds. On the right, it can be seen that as $\delta$ decreases $\textrm{MdPC}(\sigma^2)$ is similar to the closed-loop control dynamics. 
	Table \ref{test2table} quantifies the performances of the MdPC($\sigma^2$) reporting the percentage of control updates performed over $N_T = 60$ steps, the variance $\sigma^2[g]$ at time $T=3$, and the value  of the cost functional \eqref{eq:runcost_ex}.

	\begin{table}[!ht]
		\center
		{
			\begin{tabular}{ccccc}
				& \multicolumn{3}{c}{MdPC($\sigma^2$)} 
				& \multicolumn{1}{c}{closed-loop}\\
 				\cmidrule(lr){2-4} \cmidrule(lr){5-5}\\[-1em]
				$\delta$  & 1 &1e-2 & 1e-8  & - - \\
				{ update} ($\%$) & 13 \%      & 40 \%     & 99 \%   & 100\%   \\
				$\sigma^2[g](T)$       & 9.1393e-05 & 3.9247e-08 & 3.3374e-08  & 3.3371e-08  \\    
				$J_{\Delta t,N_s}$       & 3.0059
				& 2.9976 & 2.9951   & 2.9951 \\   
				\hline\\
			\end{tabular}
		}
		\caption {{\em Test 2.} Number of updates and final values of the variance using different values of $\delta$.  We compare the different control approaches. 
		The update percentage is computed over $N_T = 60$.}\label{test2table}
	\vskip-5mm
	\end{table}
	\subsection{Test 3: Aggregation dynamics}
	The last example is a first-order aggregation model in 2D, where agents interact according to an attraction-repulsion kernel. We consider the following interaction kernel $P(v,w) = \vert w-v \vert^{\alpha - 2} - \vert w-v \vert^{\beta - 2},$ where $\alpha = 4$ and $\beta = 2$.
	For these specific values of the parameters it can be shown that the equilibrium configuration is an uniform distribution on an annulus of radius $R = \frac{1}{\sqrt{3}}$, and same center of mass as the initial distribution. For analytical and numerical characterizations of the equilbrium of these models we refer to \cite{BCLR13}. We consider an initial density of particles uniformly distributed on the 2D disc of radius $R_0 = \frac{2}{\sqrt{3}}$ centered in $(-1,1)$, that is
	\begin{equation}
			g^0(v) = \frac{1}{\vert \mathcal{C} \vert} \chi_{\mathcal{C}}(v)\,,\qquad\mathcal{C}:= \lbrace v \in \mathbb{R}^2 : \vert v - (-1,1)^\top \vert \leq R_0 \rbrace\,,
	\end{equation}
and $|\mathcal{C}|$ denoting its volume.  In order to simulate the dynamics we consider $N_s = 10^5$ particles sampled from $g^0(v)$ and we implement the forward scheme \eqref{eq:MFCM} with fixed time step $\Delta t = 0.01$. We select $M=10$ particles for the approximation of the non-local interactions.  
	We use the MdPC($\sigma^2$) approach with a penalization factor $\nu = 1$ and a stopping tolerance $\delta = 0.1$.
	In Figure \ref{test3} we report the evolution of the mean-field and the microscopic dynamics. The latter is sampled with $N=30$ particles from $g^0$.
	The second row of Figure \ref{test1} shows the uncontrolled dynamics, where mass concentrates towards a 2D annulus of radius $R = \frac{1}{\sqrt{3}}$.
	While the third row depicts the open-loop control case where MdPC($\sigma^2$) is applied. At time $T = 7$ the distribution is converged to a concentration at $\bar{v}=(\bar{v}_1,\bar{v}_2)=(0,0)$.

	As in previous tests we illustrates a comparison between different open-loop controls varying tolerance $\delta$ in algorithms \ref{alg:MPCsigma}.
	Figure \ref{test3bound} and Table \ref{test3table} highlight that with a very small value of $\delta$, the MdPC approach coincides with the closed-loop approach.
	\begin{figure}[!ht]
		\begin{center}
			\includegraphics[width=0.45\linewidth]{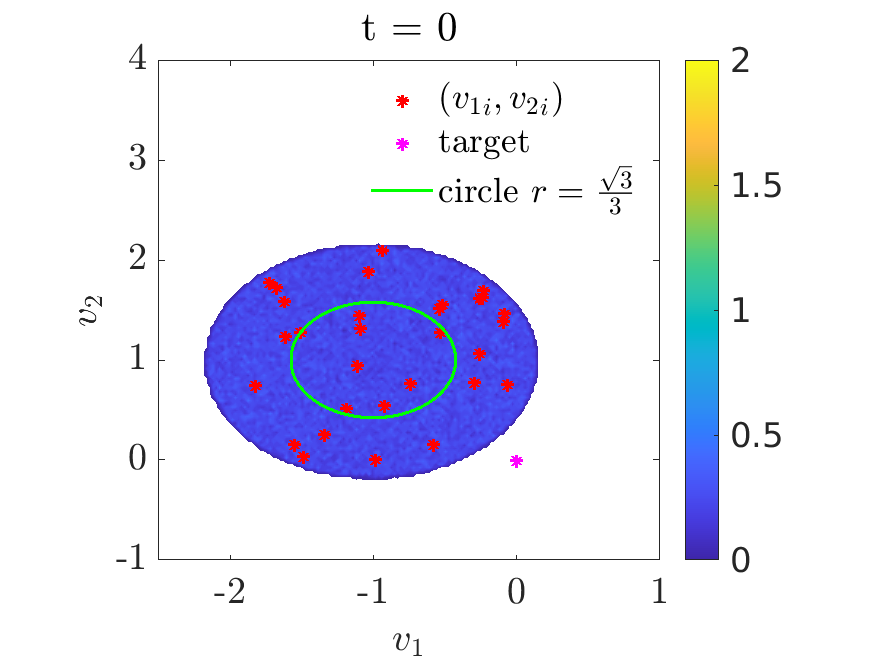}
			\\
			\textrm{Uncontrolled dynamics} \\
			\includegraphics[width=0.45\linewidth]{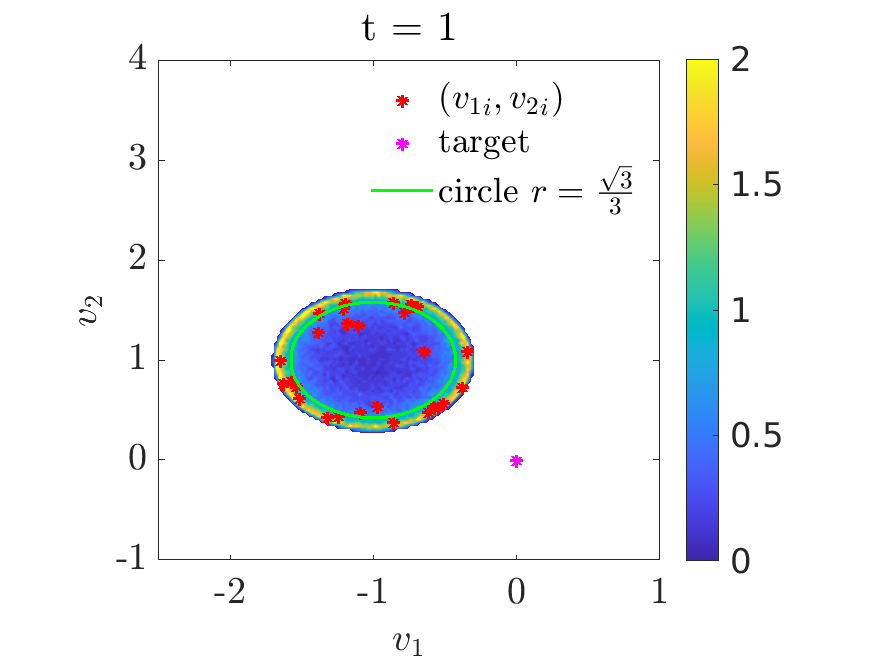}
			\includegraphics[width=0.45\linewidth]{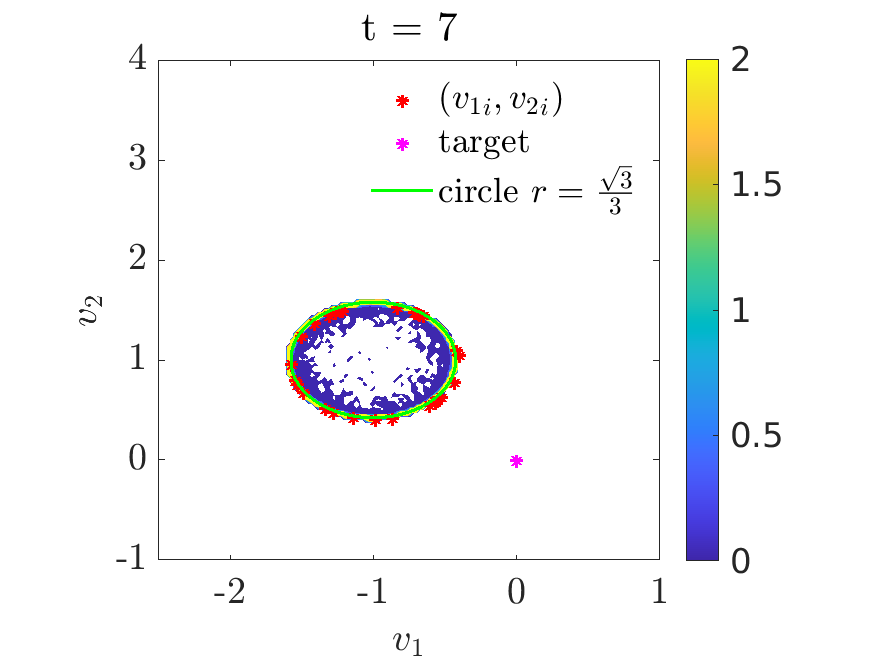}
			\\
			{$\textrm{MdPC}(\sigma^2)$} \\
			\includegraphics[width=0.45\linewidth]{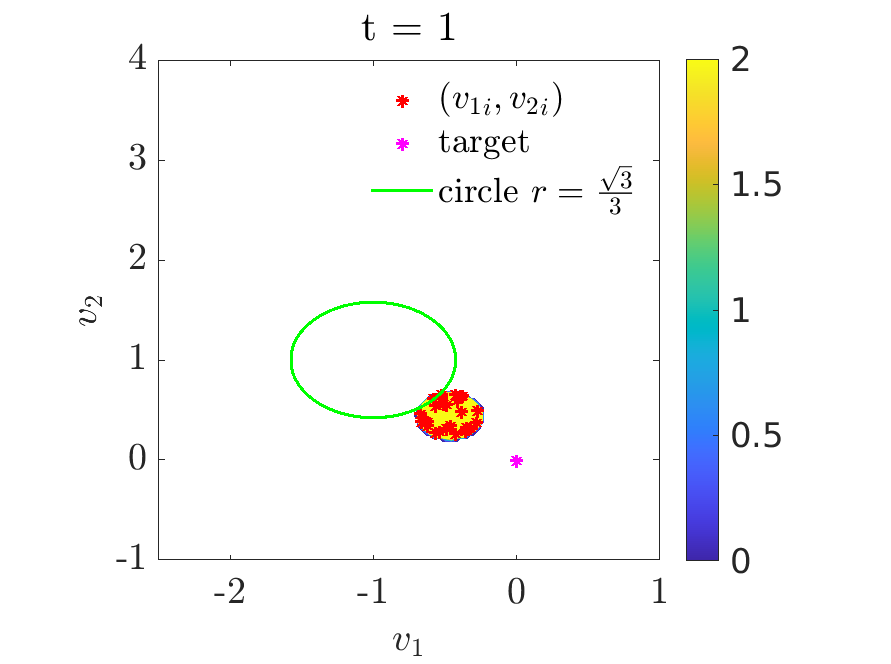}
			\includegraphics[width=0.45\linewidth]{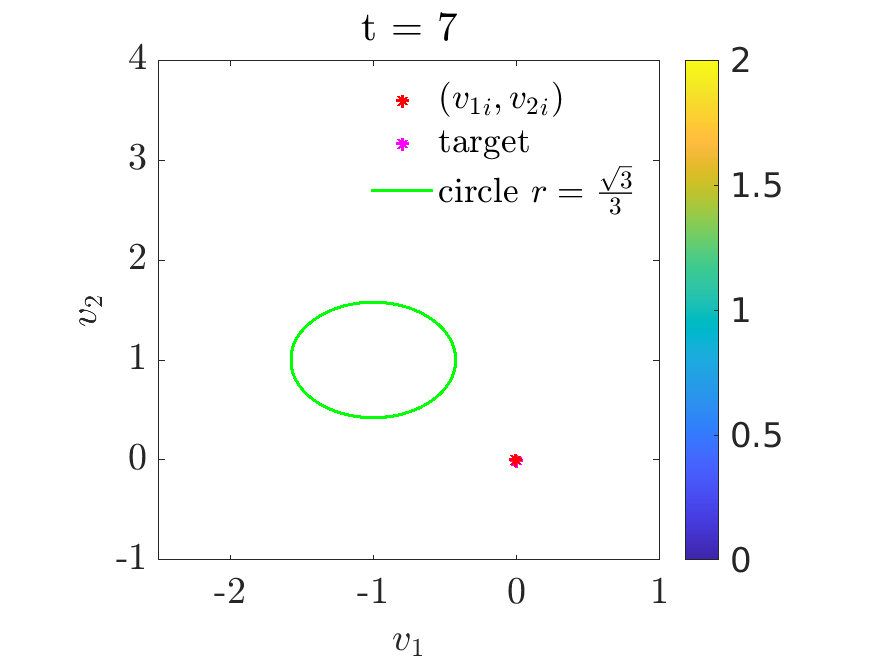}
			
		\end{center}
		\caption{{\em Test 3.} {Comparison between the uncontrolled and controlled mean-field evolution of the aggregation dynamics with $\textrm{MdPC}(\sigma^2)$ and $\delta = 0.1$. Without control intervention the alignment state is not reached.}} \label{test3}
	\end{figure}

	\begin{figure}[!ht]\label{test3bound}
		\begin{center}
			\includegraphics[width=0.45\linewidth]{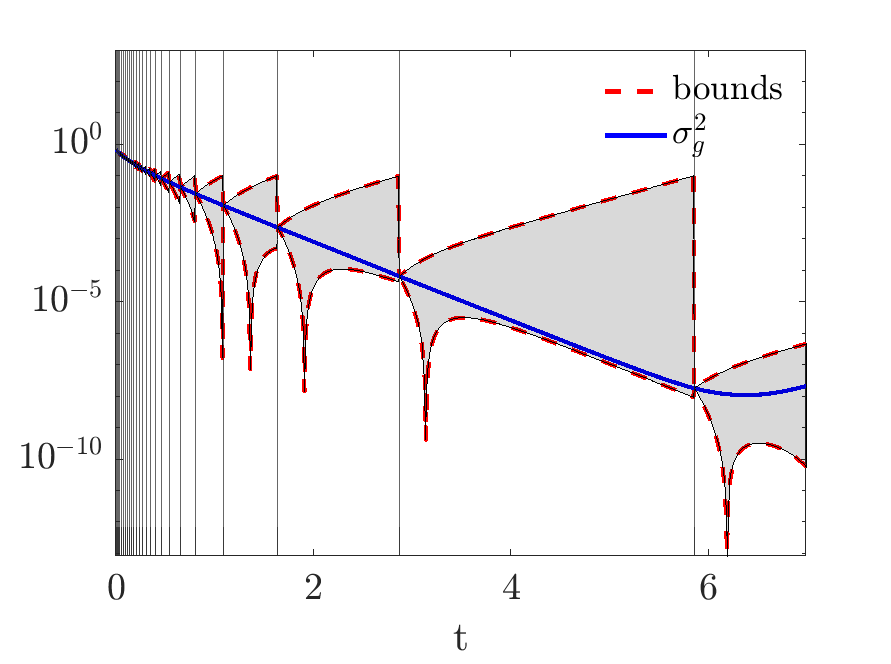}
			\includegraphics[width=0.45\linewidth]{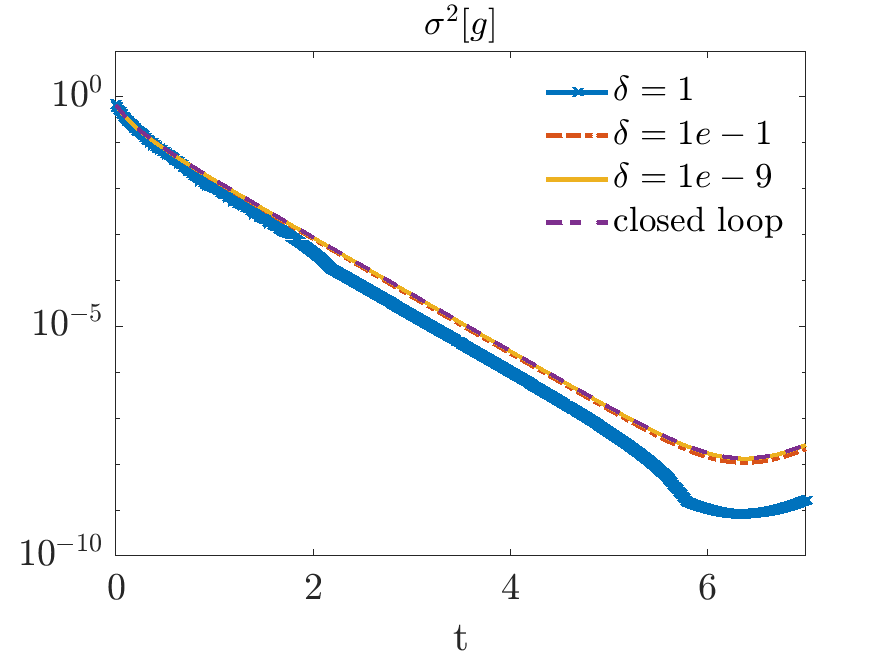}
			\caption{{\em Test 3.} {Left: variance evolution and bounds for the first order attractive-repulsive one-dimensional mean-field dynamics with tolerance $\delta = 0.1$. Right: variance decays with different values of $\delta$}.}\end{center}
	\end{figure}

	\begin{table}[!ht] \label{test3table}
		\center
		\begin{tabular}{ccccc}
			& \multicolumn{3}{c}{MdPC($\sigma^2$)} 
			& \multicolumn{1}{c}{closed-loop}\\
			\cmidrule(lr){2-4}\cmidrule(lr){5-5}\\[-1em]
			$\delta$  & 1 &1e-1 & 1e-9  & - - \\
			\texttt{ update} ($\%$)  & 1 \%      & 4 \%     & 99 \%    & 100\%   \\
			$\sigma^2[g](T)$       & 1.6875e-09 & 2.1253e-08 & 2.6151e-08  & 2.6187e-08  \\    
			$J_{\Delta t,N_s}$       & 3.0459 & 2.9751 & 2.9750   & 2.9750 \\   
			\hline\\
		\end{tabular}
		\caption {{\em Test 3: Aggregation dynamics}. Number of updates and final values of the variance using different values of $\delta$.  
			The update percentage is computed over $N_T = 700$.}
		\vskip-7mm
	\end{table}

	\paragraph{Concluding Remarks} We have studied the design of control laws for interacting particle system based on the solution of the optimal control problem associated to linearized dynamics. We have assessed the impact of different sub-optimal control laws into the original non-linear dynamics deriving mean-field limits of the microscopic constrained systems and estimating analytically and numerically the decay of the first and second moments. We proposed a novel numerical technique based on the moments decay (MdPC). In particular, we obtain a hierarchy of approximations from open-loop to closed-loop control by scaling the tolerance level. These strategies have shown to be robust even with considerably fewer updates of the control law for the non-linear dynamics. The proposed methodology expands the existing NMPC literature by developing a new paradigm in which the control laws are updated based on dynamic information of the system. Here, the use of moments information is a particular example suitable in the context of mean-field dynamics. Further extensions and analysis will include the study of other dynamic indicators for control update, in particular those that could be linked to a physical observable of the system, and the incorporation of nonlinear state estimation in the control loop.

	\bibliographystyle{abbrv}
	\bibliography{biblioCS2}
\end{document}